 \newtheorem{thm}{Theorem}[section]
 \newtheorem{cor}[thm]{Corollary}
 \newtheorem{lem}[thm]{Lemma}
 \newtheorem{prop}[thm]{Proposition}
 \theoremstyle{definition}
 \newtheorem{defn}[thm]{Definition}
 \theoremstyle{remark}
 \newtheorem*{ex}{Example}
 \numberwithin{equation}{section}
\begin{document}

%-------------------------------------------------------------------------
% editorial commands: to be inserted by the editorial office
%
%\firstpage{1} \volume{228} \Copyrightyear{2004} \DOI{003-0001}
%
%
%\seriesextra{Just an add-on}
%\seriesextraline{This is the Concrete Title of this Book\br H.E. R and S.T.C. W, Eds.}
%
% for journals:
%
%\firstpage{1}
%\issuenumber{1}
%\Volumeandyear{1 (2004)}
%\Copyrightyear{2004}
%\DOI{003-xxxx-y}
%\Signet
%\commby{inhouse}
%\submitted{March 14, 2003}
%\received{March 16, 2000}
%\revised{June 1, 2000}
%\accepted{July 22, 2000}
%
%
%
%---------------------------------------------------------------------------
%Insert here the title, affiliations and abstract:
%

\title[Tanaka-Webster Biharmonic Hypersurfaces]
 {Tanaka-Webster Biharmonic Hypersurfaces in Sasakian Space forms}

%----------Author 1
\author[N. Mosadegh]{N. Mosadegh}

\address{Department of Mathematics,
 Azarbaijan Shahid Madani University,\\
Tabriz 53751 71379, Iran}

\email{n.mosadegh@azaruniv.ac.ir}

%\thanks{This work was completed with the support of our
%\TeX-pert.}
%----------Author 2
\author{E. Abedi}
\address{Department of Mathematics,
 Azarbaijan Shahid Madani University,\\
Tabriz 53751 71379, Iran}
\email{esabedi@azaruniv.ac.ir}
%----------classification, keywords, date
\subjclass{Primary 53C42; Secondary 53C43, 53B25}

\keywords{Biharmonic hypersurfaces, Tanaka-Webster connection, Sasakian space forms.}

\date{January 1, 2004}
%----------additions
\dedicatory{}
%%% ----------------------------------------------------------------------

\begin{abstract}
In this article, we consider the concept of biharmonicity about hypersurfaces in the Sasakian space form which is equipped with the Tanaka-Webster connection. Then, we call them the Tanaka-Webster biharmonic hypersurfaces and obtain the necessary and sufficient existence condition about it. Also, we show a nonexistence result of the Thanaka-Webster biharmonic Hopf Hypersurfaces, where the gradient of the mean curvature is a principal direction. 

\end{abstract}
\footnotetext[1]{The second author is as corresponding author.}
%%% ----------------------------------------------------------------------
\maketitle
%%% ----------------------------------------------------------------------
%\tableofcontents
\section{Introduction}

A harmonic map $\psi: M\longrightarrow N$ between two Riemannian manifolds, where $M$ is compact, is known as the critical point of the energy functional
\begin{eqnarray*}
E:C^{\infty} (M,N)\longrightarrow R,\ \ E(\psi)=\frac{1}{2}\int_M \| d\psi\|^2 d\vartheta,
\end{eqnarray*}
where $C^{\infty} (M,N)$ denotes space of the smooth maps. With respect to the similar idea authors \cite{Ee, Ee1}, introduced $k-$harmonic maps and proposed they are the critical points of $E_k.$ Therefore, when $k=2$ the biharmonic maps represent as critical point of the bienergy
\begin{eqnarray*}
E_2:C^{\infty}(M, N)\longrightarrow R,\ \ E_2(\psi)=\frac{1}{2}\int_M |\tau (\psi)|^2d\vartheta,
\end{eqnarray*}
where the tension field associated to map $\psi$ is given by $\tau(\psi)= \textsf{trace} \nabla d\psi$. It is known, vanishing the tension field is a characterization of the harmonic maps. Later on, the first variation formula of $E_2$ was derived by Jiang \cite{Jiang} and given a new definition of $2$-harmonic maps in the variational point of view, written as 
\begin{eqnarray}\label{6.0}
\tau_2(\psi)= -J(\tau(\psi))= -\Delta \tau(\psi)- \textsf{trace} R^N(d\psi(.), \tau (\psi))d\psi(.)=0
\end{eqnarray}
that is, $\tau(\psi)\in ker J$, where $J$ is an elliptic differential operator, called the Jacobi operator. Here $\Delta=- \textsf{trace} \nabla^2$ stands for the Laplace-Beltrami operator, where $\nabla$ is induced connection in the pull back bundle $\psi^{-1}(TN)$. Also, $R^{N}$ is the curvature operator on $N$ which is defined by $R^N(X,Y)=[\nabla_X, \nabla_Y]- \nabla_{[X, Y]}$ for all $X$ and $Y$ tangent to $N$. Since, each harmonic map is biharmonic because $J$ is a linear operator, interesting is in non harmonic biharmonic maps which are named proper biharmonic.

Independently, by taking into account the harmonic mean curvature vector field, biharmonic notion of submanifold in the Euclidean space was defined by B. Y. Chen. Indeed, with respect to the characterization formula of the biharmonic Riemannian immersions into the Euclidean space, the biharmonic concept in the sense of Chen will be obtained, e.g, $\Delta H=0$ where $H$ denotes the mean curvature vector field (see \cite{Ch2}).

Into non-positive and positive curved spaces, nonexistence results for the proper biharmonic Riemannian immersions were obtained (see \cite{Ba1, Ba, ID, YU1, YU, Has}). Specialy, it was shown that does not exist a proper biharmonic hypersurfaces neither in the Euclidean space nor in the hyperbolic spaces $H^{n+1}$ base on the number of distinct principal curvatures of the Weingarten operator.

Additionally, in spaces of the nonconstant sectional curvature there exist several classification results concerning the proper biharmonic hypersurfaces which has been investigated in \cite{Oni,JI}. For example, all the proper biharmonic Hopf cylinders in $3$-dimensional Sasakian space forms were classified. Morevere, all the proper-biharmonic Hopf cylinders over a homogeneous real hypersurfaces in the complex projective spaces were determined. In particular, authors in \cite{Abedi2, Abedi3} got some results about the biharmonic immersed hypersurfaces in the warped product space as well.

 The aim of this paper is to introduce and study a new notion of the biharmonicity for the immersed hypersurfaces inside of a Sasakian space form, where is equipped with both the Tanaka-Webster and the Levi-Civita connections. At first, we find the necessary and sufficient condition of a Tanaka-Webster biharmonic hypersurface in the Sasakian space forms. Then, we consider the Tanaka-Webster biharmonic pseudo Hopf hypersurfaces and determine the existance and nonexistence results of them, where the gradient of the mean curvature, \textsf{grad}$|H|$, plays a significent role. Furthermore, we obtain the Tanaka-Webster biharmonic pseudo Hopf hypersurfaces which are minimal.
 
 \section{preliminaries}
In this section we introduce the notions and gather some known results that will be used throughout the paper.
Indeed, an odd dimensional manifold $M^{2m+1}$ equipped with tensor fields $\varphi$, $\xi$ and $\eta$ of type $(1, 1)$, $(0, 1)$ and $(1, 0)$, respectively, is called an almost contact manifold where the following condition satisfies
\begin{eqnarray*}
\varphi^2(X)=-X+ \eta(X)\xi,\ \ \
\eta(\xi)=1,\ \ \  \eta(\varphi X)=0,
\end{eqnarray*}
for $X \in T(M)$, also the triple $(\varphi, \xi, \eta)$ is named an almost contact structure. Now, $M^{2m+1}$ is endowed a Riemannian metric $g$, in such a way that
\begin{eqnarray*}
\eta(X)=g(\xi, X),\ \ \
 g(\varphi X, \varphi Y)= g(X, Y)- \eta(X)\eta(Y),\ \ \
\end{eqnarray*}
where $X$ and $Y$ are vector fields on $M^{2m+1}$. If $g(X, \varphi Y)=d\eta(X, Y)$, then $(\varphi, \xi, \eta, g)$ is called a contact metric structure. Now, $(\widetilde {M}^{2m+1},\varphi, \xi, \eta, g)$ is called a contact metric manifold. Also, a contact metric manifold $\widetilde{M}^{2m+1}$ is named a $K-$contact manifold, if $\xi$ be a Killing vector field. Then we have
\begin{eqnarray}\label{6.2}
\nabla_X \xi=-\varphi X,
\end{eqnarray}
where $\nabla$ is the Levi-Civita connection on $\widetilde{M}^{2m+1}$. A contact metric manifold $(\widetilde{M}^{2m+1}, \varphi, \xi, \eta, g)$  is known as a Sasakian manifold, if and only if
\begin{eqnarray}
 (\nabla_X \varphi)Y=g(X,Y)\xi- \eta(Y)X.
\end{eqnarray}
 A Sasakian manifold is a $k-$contact manifold as well \cite{Tan2}.

 Let $(\widetilde{M}^{2m+1}, \varphi, \xi, \eta, g)$ be a Sasakian manifold. The sectional curvature of $2$-plane spanned by $\{X, \varphi X\}$ is called $\varphi$-sectional curvature, where $X$ is orthogonal to $\xi$. Also, a Sasakian manifold which has constant $\varphi$-sectional curvature $c$ is called a Sasakian space form and determined by $\overline{M}^{2m+1}(c)$. The curvature tensor field of a Sasakian space form is given by
\begin{eqnarray}\label{66}
\overline{R}(X, Y)Z &=& -\frac{c-1}{4}\{\eta(Z)[\eta(Y)X - \eta(X)Y] \nonumber \\
& & + [g(Y, Z)\eta(X)- g(X, Z)\eta(Y)]\xi\nonumber \\
& & + g( \varphi X, Z)\varphi Y + 2g(\varphi X, Y)\varphi Z - g(\varphi Y,Z)\varphi X\}  \\
& & + \frac{c+3}{4}\{ g(Y, Z)X - g(X, Z)Y\}. \nonumber
\end{eqnarray}

The Tanaka-Webster connection can be defined on any strongly pseudoconvex CR-manifold. The Tanaka-Webster connection was introduced by Tanno at the end of $70's$ (see \cite{Ta1, We1})on any contact metric manifold, where the underlying almost CR-structure in general is not CR-integrable.  Precisely, the connection has been introduced for the contact metric manifolds, as following
\begin{eqnarray}\label{6.1}
\hat{\nabla}_X Y= \nabla_X Y+(\nabla_X \eta)(Y)\xi -\eta(Y)\nabla_X \xi + \eta(X)\varphi Y,
\end{eqnarray}
for all $X, Y\in \Gamma(T(\widetilde{M}^{2m+1}))$, where $\nabla$ denotes the Levi-Civita connection on $\widetilde{M}^{2m+1}$ see \cite{ D. E, Tan}. Furthermore, it was shown that a Tanaka-Webster connection $\hat{\nabla}$ is an unique linear connection on the contact metric manifold $\widetilde{M}^{2m+1}$, where the tensors $\xi, \eta$ and $g$ are all $\hat{\nabla}$-parallel, that is,
\begin{eqnarray}\label{6.66}
\hat{\nabla}\xi^{*}=0, \ \ \ \hat{\nabla} \eta=0,\ \ \ \hat{\nabla}g=0,
\end{eqnarray}
 and whose torsion tensors satisfies
\begin{eqnarray*}
\hat{T}(X, Y)= 2d\eta(X,Y)\xi, \ \ \ 
\hat{T}(\xi, \varphi X)= -\varphi \hat{T}(\xi, X),
\end{eqnarray*}
for all $X\in \Gamma(T(\widetilde{M}^{2m+1}))$.

At the end of this section, in order to illustrate the existence of the biharmonic hypersurfaces in the Sasakian space form, where $\xi$ is tangent to them, we construct an example.

%Now, we construct an example of a Riemannian biharmonic surface in Sasakian space form $R^3(-3)$ which is harmonic($H=0$),  :
 \begin{ex}
Let $R^{3}$ be a hypersurface in the Euclidean space $R^4$. Let $J$ be a standard almost complex structure in $R^4$ considered as ${C}^2$ and set $\xi=-J N$, where $N$ is an unit normal vector field of $R^3$. Define $\varphi$ by $\pi o J$, where $\pi$ is the natural projection of the tangent space of $R^4$ in to the tangent space of $R^3$. Let $(x, y, z)$ be the Euclidean coordinate in $R^3$, we consider
\begin{eqnarray*}
\eta= \frac{1}{2}(dz-ydx),\ \
g=\eta \otimes \eta +\frac{1}{4}(dx^2+ dy^2),\nonumber\\ \ \varphi(X\frac{\partial}{\partial x}+ Y\frac{\partial}{\partial y}+Z\frac{\partial}{\partial z})=Y\frac{\partial}{\partial x}-X\frac{\partial}{\partial y}+Yy\frac{\partial}{\partial z}
\end{eqnarray*}
where $\xi=2\frac{\partial}{\partial z}$. Then $(R^3, \varphi, \eta, \xi, g)$ is called a Sasakian space form where its $\varphi$-sectional curvature is $c=-3$. Let $f\in C^{\infty}(R^3(-3))$ defines $f(x,y,z)=x+z$, then we consider the level set of $f$ like $M^2=f^{-1}(0)= \{(x,y,z)\in R^3; x+z=0\}$ which is claimed as a minimal surface (as well as biharmonic) of $R^{3}(-3)$. In order to show this property, we choose an appropriate orthonormal frame field on $R^3(-3)$ such as
\begin{eqnarray*}
e_1= 2(\frac{\partial}{\partial x}+ y \frac{\partial}{\partial z}),\ \ e_2=-2\frac{\partial}{\partial y}\ \ ,e_3= 2\frac{\partial}{\partial z}
\end{eqnarray*}
then we calculate $\textsf{grad} f=\sum_{i=1}^3 e_i(f)e_i = 2((1+y)e_1+ e_3)$. So, $N=\frac{\textsf{grad} f}{|\textsf{grad }f|}=\frac{1}{\sqrt{(1+y)^2+1}}((1+y)e_1+e_3)$ is an unit normal vector on $M^2$. Also, $-\varphi N=V=-\frac{1+y}{2\sqrt{(1+y)^2+1}}e_2 $ is in $\Gamma(TM^2)$. Now, we take an orthonormal frame field $\{E_1=\frac{V}{|V|}=-e_2,\ \ E_2= \sqrt{\frac{(1+y)^2}{1+(1+y)^2}}(-\frac{1}{(1+y)}e_1+e_3)\}$ on $M^2$. Some easy computations show the following bracket relations, which we need to calculate the Weingarten operator $A$ of $M^2$ in the Sasakian space $R^3(-3)$, as following
\begin{eqnarray*}
&&[e_1,e_2]=2e_3,\ \ [e_1, e_3]=0,\ \ [e_2, e_3]=0\\
&&\nabla_{e_1}e_2=-\nabla_{e_2}e_1 =e_3,\ \ \nabla_{e_1}e_3=\nabla_{e_3}e_1=-e_2,\ \ \nabla_{e_2}e_3=\nabla_{e_3} e_2= e_1\nonumber
\end{eqnarray*}
after all, we can calculate
\begin{eqnarray*}
-AE_1= \nabla_{E_1} N=  \frac{1-(y+1)^2}{(1+y)^2+1}E_2 ,\\
-AE_2=\nabla_{E_2} N= \frac{1-(y+1)^2}{(1+y)^2+1}E_1,
\end{eqnarray*}
then we have
\begin{eqnarray*}
A=\left(
  \begin{array}{cc}
    0 & \frac{1-(y+1)^2}{(1+y)^2+1} \\
    \frac{1-(y+1)^2}{(1+y)^2+1} & 0 \\
  \end{array}
\right).
\end{eqnarray*}
So, the shape operator presents that the mean curvature $|H|=0$. In other words, the planes which are parallel to the $xz$-plane are biharmonic (harmonic) surfaces in the Sasakian space form $R^3(-3)$. Furthermore, the other non trivial example of the biharmonic surface in the Sasakian space form $R^{3}(-3)$ can be considered where $f\in C^{\infty}(R^3(-3))$ and $f(x,y,z)=x^2+z^2$, then we take the level set of $f$ like $M^2=f^{-1}(1)= \{(x,y,z)\in R^3; x^2+z^2=1\}\approx S^1 \times R$, similarly with respect to the above coordinate and orthonormal frame field, the cylinder $S^1\times R$ is a minimal surface (as well as biharmonic) in $R^{3}(-3)$ as well.
\end{ex}

\section{Tanaka-Webster Biharmonic Hypersurfaces }
Let $(\overline{M}^{2m+1}(c), \varphi, \xi, \eta, g)$ be a Sasakian space form with respect to constant $\varphi-$sectional curvature $c$, which is equipped with the Tanaka-Webster connection and  $M^{2m}$ is an isometrically immersed hypersurface there. We suppose that $\xi$ and $V=-\varphi N$ are tangent vector fields on $M^{2m}$, where $N$ is a local unit normal vector on $M^{2m}$.

Now, in order to have the Tanaka-Webster biharmonic notion of hypersurfaces in the Sasakian space form, we consider the following required Lemma.

\begin{lem}
Let $\overline{M}^{2m+1}(c)$ be a Sasakian space form. Then, the Tanaka-Webster connection holds in the following formula
\begin{eqnarray}\label{6.5}
\nabla^{\star}_X Y= \overline{\nabla}_X Y +g(X, \varphi Y)\xi+ \eta(Y)\varphi X +\eta(X)\varphi Y,
\end{eqnarray}
where $X, Y$ and $\overline{\nabla}$ denote tangent vector fields and the Levi-Civita connection on $\overline{M}^{2m+1}(c)$, respectively.
\end{lem}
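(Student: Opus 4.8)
The plan is to identify the connection $\nabla^{\star}$ in \eqref{6.5} with the Tanaka-Webster connection $\hat{\nabla}$ defined in \eqref{6.1}, and to simplify the general contact-metric expression \eqref{6.1} using the extra structure available on a Sasakian space form. Since a Sasakian manifold is in particular $K$-contact, the defining formula \eqref{6.2}, namely $\nabla_X \xi = -\varphi X$, is at our disposal, and this is precisely the ingredient that collapses the general formula into the cleaner form \eqref{6.5}. Here $\overline{\nabla}$ plays the role of the Levi-Civita connection $\nabla$ appearing in \eqref{6.1}.

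First I would rewrite the term $(\nabla_X \eta)(Y)$. Using $\eta(Y)=g(\xi,Y)$ together with metric compatibility of $\overline{\nabla}$, one computes
\begin{eqnarray*}
(\overline{\nabla}_X \eta)(Y)
= X\,g(\xi,Y) - g(\xi, \overline{\nabla}_X Y)
= g(\overline{\nabla}_X \xi, Y).
\end{eqnarray*}
Substituting $\overline{\nabla}_X \xi = -\varphi X$ from \eqref{6.2} gives $(\overline{\nabla}_X \eta)(Y) = -g(\varphi X, Y)$. The next step is to invoke the skew-symmetry of $\varphi$ with respect to $g$, which follows from the compatibility relation $g(\varphi X, \varphi Y) = g(X,Y) - \eta(X)\eta(Y)$ and yields $g(\varphi X, Y) = -g(X, \varphi Y)$. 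Hence $(\overline{\nabla}_X \eta)(Y) = g(X, \varphi Y)$, which produces the first correction term of \eqref{6.5}.

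Finally I would treat the term $-\eta(Y)\,\overline{\nabla}_X \xi$ in \eqref{6.1}; again using \eqref{6.2} this becomes $-\eta(Y)(-\varphi X) = \eta(Y)\varphi X$, supplying the second correction term, while the term $\eta(X)\varphi Y$ is already in the desired form. Collecting everything,
\begin{eqnarray*}
\hat{\nabla}_X Y
= \overline{\nabla}_X Y + g(X,\varphi Y)\xi + \eta(Y)\varphi X + \eta(X)\varphi Y,
\end{eqnarray*}
which is exactly \eqref{6.5} with $\nabla^{\star}=\hat{\nabla}$. The computation is essentially a bookkeeping exercise, so I do not anticipate a genuine obstacle; the only point demanding care is sign consistency, specifically making sure that the convention for the skew-symmetry $g(\varphi X, Y) = -g(X,\varphi Y)$ is applied in the right direction and that the double negative in $-\eta(Y)\,\overline{\nabla}_X\xi$ is handled correctly, since an error in either place would flip the sign of a correction term.
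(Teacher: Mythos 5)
Your proposal is correct and follows essentially the same route as the paper: both start from the defining formula \eqref{6.1}, expand $(\overline{\nabla}_X\eta)(Y)$ via metric compatibility to get $g(\overline{\nabla}_X\xi,Y)$, and then use the $K$-contact identity \eqref{6.2} together with the skew-symmetry of $\varphi$ to arrive at \eqref{6.5}. The signs are handled correctly, so there is nothing to add.
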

\begin{proof}
By taking into the account the equations $(\ref{6.1})$ and $(\ref{6.2})$ we have
\begin{eqnarray}
\nabla^{\star}_X Y &=& \overline{\nabla}_X Y +[\overline{\nabla}_X \eta(Y)-\eta (\overline{\nabla}_X Y)]\xi+ \eta(Y)\varphi X+\eta(X)\varphi Y\nonumber\\
&=& \overline{\nabla}_X Y + g(Y, \overline{\nabla}_X \xi)\xi +\eta(Y)\varphi X+ \eta(X)\varphi(Y)\nonumber\\
&=& \overline{\nabla}_X Y + g(X, \varphi Y)\xi +\eta(Y)\varphi X+ \eta(X)\varphi(Y).\nonumber
\end{eqnarray}
\end{proof}
 Now, under an isometric immersion we can express
\begin{eqnarray}\label{6.42}
\tau_2^{\star}(\psi)= -\Delta^{\star} H- \textsf{trace} R^{\star}(d\psi(.), H)d\psi(.),
\end{eqnarray}
here $\Delta^{\star}$ stands for the Laplace-Beltrami operator on sections of the pull back bundle $\psi^{-1}(T(\overline{M}^{2m+1}))$ and $R^{\star}$ denotes the curvature tensor corresponding to $\nabla^{\star}$ on the Sasakian space form $\overline{M}^{2m+1}(c)$ and we utilize the following sign conventions
\begin{eqnarray*}
\Delta^{\star} X = - \textsf{trace} \nabla ^{\star^2} X, \ \ \ \ \forall X \in \psi^{-1}(T(\overline{M}^{2m+1})),\label{6.4}\\
 R^{\star}(X, Y)=[\nabla^{\star}_X, \nabla^{\star}_Y]- \nabla^{\star}_{[X, Y]}\ \ \ \ X, Y\in{T(\overline{M}^{2m+1})}.
\end{eqnarray*}
After all, we can define the Tanaka-Webster biharmonic hypersurfaces over a Sasakian space forms as following
\begin{defn}\label{6.3}
Let $\psi: M^{2m}\longrightarrow \overline{M}^{2m+1}(c)$ be an isometric immersion of a hypersurface $M^{2m}$ of the Sasakian space form $\overline{M}^{2m+1}(c)$ associated to the Tanaka-Webster connection $\nabla^{\star}$. Then, $M^{2m}$ is called Tanaka-webster biharmonic hypersurface if $\tau_2^{\star}(\psi)=0.$
\end{defn}
Now, we have all the necessary ingredients to prove the following results.
\begin{lem}\label{6.44}
Let $M^{2m}$ be an immersed hypersurface in a Sasakian space form $\overline{M}^{2m+1}(c)$ associated to the Tanaka-Webster connection $\nabla^{\star}$, isometrically. Then we have
\begin{eqnarray}
\Delta^{*} H = \Delta H + 2g(\textsf{grad}|H|,V)\xi-2\eta(\textsf{grad}|H|)V - 2H,
\end{eqnarray}
where $\xi$ and $V$ are tangent to $M^{2m}$.
\end{lem}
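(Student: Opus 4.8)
The plan is to substitute the connection relation (\ref{6.5}) into the definition $\Delta^{\star}H=-\,\textsf{trace}\,\nabla^{\star 2}H$ and to read off the correction terms block by block. Writing the difference tensor
\[
D(X,W):=\nabla^{\star}_XW-\overline{\nabla}_XW=g(X,\varphi W)\xi+\eta(W)\varphi X+\eta(X)\varphi W,
\]
so that $\nabla^{\star}_XW=\overline{\nabla}_XW+D(X,W)$ for every section $W$ of $\psi^{-1}(T\overline{M}^{2m+1})$, I would fix $p\in M^{2m}$, choose an orthonormal frame $\{e_i\}$ that is geodesic for the Levi--Civita connection of $M^{2m}$ at $p$ (so that the base term $\nabla^{\star}_{\nabla_{e_i}e_i}H$ drops out there), and expand
\[
\nabla^{\star}_{e_i}\nabla^{\star}_{e_i}H=\overline{\nabla}_{e_i}\overline{\nabla}_{e_i}H+\overline{\nabla}_{e_i}\big(D(e_i,H)\big)+D\big(e_i,\overline{\nabla}_{e_i}H\big)+D\big(e_i,D(e_i,H)\big).
\]
Taking $-\sum_i$, the first block reassembles $\Delta H$, and the remaining three blocks must produce the three correction terms.

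Before computing I would collect the algebraic facts forced by $\xi,V=-\varphi N\in\Gamma(TM^{2m})$ and $H=|H|N$: since $N\perp TM^{2m}$ and $\eta\circ\varphi=0$ one has $\eta(H)=0$, $\eta(V)=0$, $g(e_i,N)=0$; from $\varphi^2=-\mathrm{id}+\eta\otimes\xi$ with $\eta(N)=0$ one gets $\varphi N=-V$, $\varphi V=N$ and $\varphi\xi=0$; for the frame, $\sum_ig(e_i,V)e_i=V$, $\sum_ie_i(|H|)e_i=\textsf{grad}|H|$ and $\sum_i\eta(e_i)^2=|\xi|^2=1$. The decisive extra ingredient is the identity $A\xi=-V$: comparing the normal parts of $\overline{\nabla}_X\xi=-\varphi X$ (equation (\ref{6.2})) through the Gauss and Weingarten formulas gives $g(A\xi,X)=-g(X,V)$ for all $X$, hence $A\xi=-V$ and $\varphi A\xi=-N$. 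I would also record $\overline{\nabla}_{e_i}H=e_i(|H|)N-|H|Ae_i$ and, using the Sasakian formula for $\nabla\varphi$, $\overline{\nabla}_{e_i}V=\varphi Ae_i$.

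With these in hand each block is routine. In the two cross blocks, the terms in which $|H|$ is differentiated assemble, via $\sum_ie_i(|H|)g(e_i,V)=g(\textsf{grad}|H|,V)$ and $\sum_ie_i(|H|)\eta(e_i)=\eta(\textsf{grad}|H|)$, into the first--order correction $2g(\textsf{grad}|H|,V)\xi-2\eta(\textsf{grad}|H|)V$; the terms in which $\xi$, $V$ or $A$ is differentiated generate multiples of $\varphi Ae_i$ and $\varphi e_i$ whose traces are $\varphi A\xi$ and $\varphi V=N$. Here the identity $A\xi=-V$ is essential: it collapses every $\varphi A\xi$ into $-N$, so that these otherwise stray contributions combine with the purely algebraic double block $\sum_iD(e_i,D(e_i,H))=-2H$ to leave exactly $-2H$. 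Summing the three blocks and restoring the sign of $\Delta^{\star}$ yields the stated identity.

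The main obstacle is the bookkeeping of the zeroth--order ($\textsf{grad}|H|$--free) contributions of the two cross blocks: naively they produce spurious $\varphi A\xi$ terms and an incorrect coefficient of $H$, and it is only the structural identity $A\xi=-V$ that removes them and fixes the $-2H$. Equally delicate is keeping the signs consistent across $D(e_i,\overline{\nabla}_{e_i}H)$ and $\overline{\nabla}_{e_i}(D(e_i,H))$ together with the overall sign in $\Delta^{\star}=-\,\textsf{trace}\,\nabla^{\star 2}$, and in particular fixing the relative sign of the two first--order terms; these are what pin down the precise coefficients of the three correction terms.
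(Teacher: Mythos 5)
Your proposal is correct and follows essentially the same route as the paper: expand $\Delta^{\star}H$ through the difference tensor supplied by $(\ref{6.5})$ in a frame geodesic at the point, and reduce the correction blocks using $\varphi N=-V$, $\varphi V=N$, $\textsf{trace}(\varphi A_H)=0$ and $A\xi=-V$, which is exactly the paper's identity $(\ref{6.50})$ in the form $A_H\xi=-|H|V$. Your derivation of $A\xi=-V$ from the normal component of $\overline{\nabla}_X\xi=-\varphi X$ via the Weingarten formula is a cleaner justification of that identity than the paper's indirect argument, but the computation is otherwise the same.
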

\begin{proof}
Let $\nabla^{\star}$ and $\overline{\nabla}$ denote the Tanaka-Webster and the Levi-Civita connections on $\overline{M}^{2m+1}(c)$, respectively. Also, let us denote by $\nabla$ the Levi-Civita connection on $M^{2m}$. We consider a parallel local orthonormal frame $\{e_\alpha\}_{\alpha=1}^{2m}$ at $p \in M^{2m}$. Then, from the equations $(\ref{6.5})$, $(\ref{6.4})$, with respect to this fact the tensors $\varphi, \eta$ and $g$ are all $\nabla^{\star}$-parallel then by applying the Weingarten equation $ \overline{\nabla}_{e_\alpha}N= -A e_\alpha + \nabla^{\perp}_{e_\alpha}N$ we have
 \begin{eqnarray}
\Delta^{\star} H &=& -\sum_{\alpha=1}^{2m} \nabla^{\star}_{e_\alpha}\nabla^{\star}_{e_\alpha}H\nonumber\\
&=& -\sum_{\alpha=1}^{2m} \nabla^{\star}_{e_\alpha}\big(\overline{\nabla}_{e_\alpha}H+ g(e_\alpha, \varphi H)\xi +\eta(H)\varphi e_\alpha+ \eta(e_\alpha)\varphi H\big)\nonumber\\
&=&-\sum_{\alpha=1}^{2m} \big\{\overline{\nabla}_{e_\alpha}\overline{\nabla}_{e_\alpha}H + g(e_\alpha, \varphi(\overline{\nabla}_{e_\alpha}H))\xi+ \eta(\overline{\nabla}_{e_\alpha}H)\varphi e_\alpha \nonumber\\ && + \eta(e_\alpha)\varphi(\overline{\nabla}_{e_\alpha}H)
 + g(e_\alpha, \varphi(\nabla^{\star}_{e_\alpha} H)\xi)+\eta(e_\alpha)\varphi(\nabla^{\star}_{e_\alpha}H)\big\}\nonumber\\
&=& \Delta H -2\big(\sum_{\alpha=1}^{2m} g(e_\alpha, e_\alpha |H|\varphi N\big)-\sum_{\alpha=1}^{2m} g(e_\alpha, \varphi A_He_\alpha))\xi\nonumber\\
 &&-2\big(\sum_{\alpha= 1}^{2m}\eta(e_\alpha)e_\alpha |H| \varphi N-
 \sum_{\alpha=1}^{2m}\eta(e_\alpha)\varphi A_H e_\alpha \big)\nonumber\\
   &&+\sum_{\alpha=1}^{2m}g(A_He_\alpha, \xi)\varphi e_\alpha +H.\label{6.6}
\end{eqnarray}
The next step is to compute all terms of the equation $(\ref{6.6})$ as following
\begin{eqnarray}\label{6.8}
\sum_{\alpha=1}^{2m} g(e_\alpha, e_\alpha |H|\varphi N)\xi&%&\sum_{\alpha=1}^{2m} g(e_\alpha, g(e_\alpha, \textsf{grad}|H|)\varphi N)\xi\nonumber\\
= - g(\textsf{grad}|H|, V)\xi,
\end{eqnarray}
and because the tensors $\varphi$ is skew symmetric then
\begin{eqnarray}\label{6.9}
&&\sum_{\alpha=1}^{2m}g(e_\alpha, \varphi A_H e_\alpha)\xi= \textsf{trace}(\varphi A_H)\xi
=0.
\end{eqnarray}
Also, for the other terms we have
\begin{eqnarray}\label{6.10}
\sum_{\alpha=1}^{2m}\eta(e_\alpha)e_\alpha |H|\varphi N &=& \sum_{\alpha=1}^{2m} \eta(e_\alpha)g(\textsf{grad}|H|, e_\alpha)\varphi N\nonumber\\
&=&\eta(\textsf{grad}|H|)\varphi N\nonumber\\
&=&-\eta(\textsf{grad}|H|)V.
\end{eqnarray}
Now, in order to calculate what is remainder we need to consider
\begin{eqnarray*}\label{6.39}
\sum_{\alpha=1}^{2m} \eta(\overline{\nabla}_{e_\alpha} H)\varphi e_\alpha&=& \sum_{\alpha=1}^{2m} g(\overline{\nabla}_{e_\alpha}H, \xi)\varphi e_\alpha
= \sum_{\alpha=1}^{2m}g(H, \varphi e_\alpha)\varphi e_\alpha,
\end{eqnarray*}
also 
\begin{eqnarray*}\label{6.40}
\sum_{\alpha=1}^{2m} \eta(\overline{\nabla}_{e_\alpha} H)\varphi e_\alpha &=& \sum_{\alpha=1}^{2m} g(\overline{\nabla}_{e_\alpha}H, \xi)\varphi e_\alpha
=-\sum_{\alpha=1}^{2m}g(A_He_\alpha , \xi)\varphi e_\alpha,
\end{eqnarray*}
so 
\begin{eqnarray}\label{6.50}
A_H \xi=-|H|V.
\end{eqnarray}
Then by taking into account the above equation the last two terms of  $(\ref{6.6})$ written as
\begin{eqnarray}\label{6.11}
\sum_{\alpha=1}^{2m} \eta(e_\alpha)\varphi A_H e_\alpha=\varphi A_H \xi= -\varphi |H|V
=-H,
\end{eqnarray}
and 
\begin{eqnarray}\label{6.12}
\sum_{\alpha=1}^{2m}g(A_H e_\alpha, \xi)\varphi e_\alpha &=& \sum_{\alpha, \beta=1}^{2m} g(A_H e_\alpha, \xi)(g(\varphi e_\alpha, e_\beta)e_\beta + g(\varphi e_\alpha, N)N)\nonumber\\
&=& -|H|\sum_{\alpha, \beta=1}^{2m} g(e_\alpha, V)(-g(e_\alpha,\varphi e_\beta)e_\beta - g(e_\alpha, \varphi N)N)\nonumber\\
&=&-|H|\sum_{\beta=1}^{2m} (g(V,\varphi e_\beta)e_\beta + g(V,V)N)\nonumber\\
&=&-H.
\end{eqnarray}
After all, from the equations $(\ref{6.8}),(\ref{6.9}),(\ref{6.10}),(\ref{6.11})$ and ($\ref{6.12}$) we have the result as it was claimed.
\end{proof}

\begin{lem}\label{6.43}
Let $\psi: M^{2m} \longrightarrow \overline{M}^{2m+1}(c)$ be an isometric immersion of $2m$-dimensional hypersurface $M^{2m}$ in a Sasakian space form $\overline{M}^{2m+1}(c)$. Let  the ambient manifold equipped with the $\nabla^{*}$ associated to the Tanaka-Webster connection, then
\begin{eqnarray}
\textsf{traceR}^{*}(d\psi(.),H)d\psi(.)=k H,
\end{eqnarray}
\end{lem}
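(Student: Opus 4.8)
The plan is to express the ambient Tanaka-Webster curvature $R^{\star}$ in terms of the Levi-Civita curvature $\overline{R}$ of the Sasakian space form, for which the explicit formula $(\ref{66})$ is available, and then to contract. Set $B(X,Y):=\nabla^{\star}_X Y-\overline{\nabla}_X Y=g(X,\varphi Y)\xi+\eta(Y)\varphi X+\eta(X)\varphi Y$ for the difference tensor read off from $(\ref{6.5})$. Since $\overline{\nabla}$ is torsion-free, substituting $\nabla^{\star}=\overline{\nabla}+B$ into $R^{\star}(X,Y)=[\nabla^{\star}_X,\nabla^{\star}_Y]-\nabla^{\star}_{[X,Y]}$ yields the curvature-comparison identity
\begin{eqnarray*}
R^{\star}(X,Y)Z&=&\overline{R}(X,Y)Z+(\overline{\nabla}_X B)(Y,Z)-(\overline{\nabla}_Y B)(X,Z)\\
&&+\,B(X,B(Y,Z))-B(Y,B(X,Z)).
\end{eqnarray*}
Because $R^{\star}$ is a tensor on $\overline{M}^{2m+1}(c)$, no second fundamental form of $\psi$ intervenes and it suffices to evaluate this pointwise at $p\in M^{2m}$ before summing.

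Next I would compute the correction $C(X,Y,Z):=(\overline{\nabla}_X B)(Y,Z)-(\overline{\nabla}_Y B)(X,Z)+B(X,B(Y,Z))-B(Y,B(X,Z))$ in closed form. The derivative $\overline{\nabla}B$ is expanded by the Leibniz rule and reduced with the Sasakian structure equations $\overline{\nabla}_X\xi=-\varphi X$ (from $(\ref{6.2})$), $(\overline{\nabla}_X\varphi)Y=g(X,Y)\xi-\eta(Y)X$, and the consequent $(\overline{\nabla}_X\eta)(Y)=g(X,\varphi Y)$; the quadratic part is collapsed using $\varphi^2 X=-X+\eta(X)\xi$, $\varphi\xi=0$ and $\eta(\varphi X)=0$. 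I expect $C$ to equal precisely the negative of the curly bracket in $(\ref{66})$, whence its coefficient $-\frac{c-1}{4}$ is promoted to the uniform value and
\begin{eqnarray*}
R^{\star}(X,Y)Z&=&-\frac{c+3}{4}\Big\{\eta(Z)[\eta(Y)X-\eta(X)Y]+[g(Y,Z)\eta(X)-g(X,Z)\eta(Y)]\xi\\
&&+\,g(\varphi X,Z)\varphi Y+2g(\varphi X,Y)\varphi Z-g(\varphi Y,Z)\varphi X\Big\}\\
&&+\,\frac{c+3}{4}\{g(Y,Z)X-g(X,Z)Y\}.
\end{eqnarray*}
Thus the Tanaka-Webster connection turns the Sasakian space form into one of ``complex-space-form type'' with a single curvature coefficient $\frac{c+3}{4}$.

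Finally I would contract with $Y=H=|H|N$ and $X=Z=e_\alpha$ over an orthonormal frame $\{e_\alpha\}_{\alpha=1}^{2m}$ of $T_pM^{2m}$, so that $\{e_1,\ldots,e_{2m},N\}$ is adapted to $\psi$. The standing hypotheses that $\xi$ and $V=-\varphi N$ are tangent to $M^{2m}$ are exactly what is needed: with $\eta(N)=0$, $g(N,e_\alpha)=0$, $g(\varphi e_\alpha,e_\alpha)=0$, $\varphi N=-V$ and $\varphi V=N$, they give $\sum_\alpha\eta(e_\alpha)^2=g(\xi,\xi)=1$ and $\sum_\alpha g(e_\alpha,V)\varphi e_\alpha=\varphi V=N$. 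These collapse the contracted bracket to $2N$ and the $\frac{c+3}{4}$-part to $-2mN$, so that
\begin{eqnarray*}
\textsf{trace}\,R^{\star}(d\psi(.),H)d\psi(.)=|H|\sum_{\alpha=1}^{2m}R^{\star}(e_\alpha,N)e_\alpha=-\frac{(c+3)(m+1)}{2}\,|H|\,N=k\,H,
\end{eqnarray*}
with $k=-\frac{(c+3)(m+1)}{2}$.

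The main obstacle is the algebra of the second step. The term $\overline{\nabla}B$ unfolds into six summands and the quadratic part into several more, and since $\nabla^{\star}$ has nonzero torsion the tensor $B$ is not symmetric, so the ordering of $X$ and $Y$ must be tracked carefully throughout. The decisive and most error-prone point is checking that these many terms assemble exactly into $-1$ times the bracket of $(\ref{66})$; it is this precise cancellation that both yields the clean form of $R^{\star}$ and forces the trace to be a scalar multiple of $H$. Once that form is in hand, the contraction is routine, relying only on the tangency of $\xi$ and $V$ and the skew-symmetry of $\varphi$.
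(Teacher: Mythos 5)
Your route is sound but genuinely different from the paper's, and it exposes a problem with the paper's constant. The paper does not compute the full tensor $R^{\star}$: it writes the single contraction $R^{\star}(X,H)X=\overline{R}(X,H)X-3g(X,\varphi H)\varphi X-\eta^{2}(X)\varphi^{2}H$ and then traces over an adapted frame $\{e_\alpha\}\cup\{\xi,V\}$, evaluating $\overline{R}(e_\alpha,H)e_\alpha$, $\overline{R}(\xi,H)\xi$, $\overline{R}(V,H)V$ separately from $(\ref{66})$. You instead push the difference tensor $B$ through the standard curvature-comparison identity to get a closed form for $R^{\star}$ itself. The cancellation you ``expect'' does hold: carrying out the Leibniz expansion with $\overline{\nabla}_X\xi=-\varphi X$ and $(\overline{\nabla}_X\varphi)Y=g(X,Y)\xi-\eta(Y)X$, the correction $C(X,Y,Z)$ is exactly $(-1)$ times the curly bracket of $(\ref{66})$, so $R^{\star}$ is of constant pseudo-holomorphic curvature $\frac{c+3}{4}$. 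Your approach buys a conceptually cleaner statement and an immediate sanity check; the paper's buys a shorter computation but, as written, contains errors.

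Concretely: specializing your (correct) $C$ to $Y=H$, $Z=X$ gives $R^{\star}(X,H)X=\overline{R}(X,H)X+3g(X,\varphi H)\varphi X+\eta^{2}(X)H$, i.e.\ the opposite sign on the $3g(X,\varphi H)\varphi X$ term from the paper's formula; in addition the paper's frame $\{e_\alpha\}_{\alpha=1}^{2m-1}\cup\{\xi,V\}$ has $2m+1$ elements although the trace runs over the $2m$-dimensional $T_pM$, and its displayed final line $(1-c)H$ is not even consistent with the stated $k=\frac{1}{4}(15-6m-c(3+2m))$. Your value $k=-\frac{(c+3)(m+1)}{2}$ is the one that survives the decisive test supplied by the paper's own Example in Section 2: for $R^{3}(-3)$ (the Heisenberg group) one checks from $(\ref{6.5})$ that $\hat{\nabla}_{e_i}e_j=0$ for the global frame $e_1,e_2,e_3$ there, so $R^{\star}\equiv 0$ and $k$ must vanish at $c=-3$; your $k$ does, while the paper's gives $k=6$. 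The only gap in your write-up is that the crucial identity $C=-\{\cdots\}$ is asserted rather than verified; once you write out that verification (it is a finite, if tedious, computation), your proof is complete, and the corrected constant should then be propagated into $l$ in Theorem \ref{6.16} and Corollary \ref{6.36}.
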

where $k=\frac{1}{4}(15 - 6 m - c (3 + 2 m))$.
\begin{proof}
By applying the expression of the curvature tensor field where the ambient manifold is equipped with the Tanaka-Webster connection $\nabla^{\star}$, which satisfies in $(\ref{6.5})$, for $X \in T(\overline{M}^{2m+1}(c))$ we have
\begin{eqnarray}
R^{\star}(X, H)X &=& %\nabla^{\star}_{X}\nabla^{\star}_H X - \nabla^{\star}_{H}\nabla^{\star}_{X} X-\nabla^{\star}_{[X, H]}X.\nonumber\\
 \overline{R}(X, H)X-3g(X, \varphi H)\varphi X-\eta^2(X)\varphi^2 H.
\end{eqnarray}
 Now, we consider an appropriate local orthonormal frame field $\{e_\alpha\}_{\alpha=1}^{2m-1}\cup \{\xi, V\}$ on $\overline{M}^{2m+1}(c)$. It follows that
\begin{eqnarray}
\textsf{trace} R^{\star}(d\psi(.), H)d\psi(.)&=& \textsf{trace}\overline{R}(d\psi(.), H)d\psi(.)\nonumber\\
&&-3g(V, \varphi H)\varphi V-\eta^{2}(\xi)\varphi^2 H\nonumber\\
                                      &=&\sum_{\alpha=1}^{2m-1}\overline{R}(e_\alpha, H)e_\alpha\nonumber\\
                                      && +\overline{R}(\xi, H)\xi+ \overline{R}(V, H)V + 4H,\nonumber
\end{eqnarray}
then with respect to the equation $(\ref{66})$ a straightforward computation shows that
\begin{eqnarray*}
\overline{R}(e_\alpha, H)e_\alpha= -\frac{c+3}{4}g(e_\alpha, e_\alpha)H,
\end{eqnarray*}
and
\begin{eqnarray*}
\overline{ R}(\xi, H)\xi=-H,\ \ \overline{R}(V, H)V=-cH.
\end{eqnarray*}
Hence
\begin{eqnarray}\label{6.15}
\textsf{trace} R^{\star}(d\psi(.), H)d\psi(.)=-(2m-1)\frac{c+3}{4}H+(1-c)H.\nonumber
\end{eqnarray}
\end{proof}
After all we obtain the main result of this section
\begin{thm}\label{6.16}
Let $\psi: M^{2m} \longrightarrow \overline{M}^{2m+1}(c)$ be an isometric immersion of $2m-$ dimensional hypersurface $M^{2m}$ in the Sasakian space form $\overline{M}^{2m+1}(c)$ equipped with the $\nabla^{*}$ associated to the Tanaka-Webster connection. Then $M^{2m}$ is a Tanaka-Webster biharmonic hypersurface if and only if
\begin{eqnarray}
&& \left\{
     \begin{array}{ll}
       \hbox{$\Delta^{\perp}H = \textsf{trace} B((.), A_H(.)) +lH;$} \\\\
        \hbox{$\textsf{trace} A_{\nabla^{\perp}_{(.)} H}(.)+m |H| \textsf{grad}|H| + g(\textsf{grad}|H|, V)\xi- \eta(\textsf{grad} |H|)V=0$,}
     \end{array}
   \right.
\end{eqnarray}
where $l=\frac{-(2m+3)c-6m+7}{4}$ is constant, $B, A$ and $H$ denote the second fundamental form, the shape operator and the mean curvature vector field of $M^{2m}$ in $\overline{M}^{2m+1}(c)$, respectively. 
\end{thm}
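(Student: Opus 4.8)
The plan is to turn $\tau_2^{\star}(\psi)=0$ into a pair of equations by splitting it into components tangent and normal to $M^{2m}$. First I would feed the two preceding lemmas into the bitension field. By Lemma \ref{6.43} the curvature term is $\textsf{trace}\,R^{\star}(d\psi(.),H)d\psi(.)=kH$ with $k=\frac{1}{4}(15-6m-c(3+2m))$, so $\tau_2^{\star}(\psi)=-\Delta^{\star}H-kH=0$ is the same as $\Delta^{\star}H=-kH$. Inserting the expression for $\Delta^{\star}H$ from Lemma \ref{6.44} gives
\[
\Delta H+2g(\textsf{grad}|H|,V)\xi-2\eta(\textsf{grad}|H|)V-2H=-kH,
\]
now written entirely through the Levi-Civita rough Laplacian $\Delta H$ and zeroth-order data. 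Since $\xi,V\in\Gamma(TM^{2m})$ the two middle terms are tangent to $M^{2m}$, while $H=|H|N$ is normal; hence the identity decouples into independent tangential and normal parts as soon as $\Delta H$ is split the same way.

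The second step is that decomposition of $\Delta H$. Writing $H=|H|N$ and taking a local orthonormal frame $\{e_\alpha\}$ parallel at the chosen point, the Weingarten formula $\overline{\nabla}_{e_\alpha}N=-Ae_\alpha$ and the Gauss formula $\overline{\nabla}_{e_\alpha}(Ae_\alpha)=\nabla_{e_\alpha}(Ae_\alpha)+g(Ae_\alpha,Ae_\alpha)N$ give, after collecting terms,
\[
\Delta H=\Delta^{\perp}H+\textsf{trace}\,B((.),A_H(.))+2\,\textsf{trace}\,A_{\nabla^{\perp}_{(.)} H}(.)+|H|\sum_{\alpha}(\nabla_{e_\alpha}A)e_\alpha,
\]
where $\Delta^{\perp}H=(\Delta|H|)N$ and $\textsf{trace}\,B((.),A_H(.))=|H||A|^2N$ are normal, the last two terms are tangent, $A$ is the shape operator with respect to $N$, and $A_H=|H|A$.

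The step I expect to be the main obstacle is rewriting the divergence $\sum_{\alpha}(\nabla_{e_\alpha}A)e_\alpha$. Because the ambient space is only a Sasakian space form and not a real space form, the Codazzi equation is not the space-form one and carries an extra curvature term; tracing it and using $\textsf{trace}\,A=2m|H|$ yields
\[
g\Big(\sum_{\alpha}(\nabla_{e_\alpha}A)e_\alpha,\,X\Big)=2m\,g(\textsf{grad}|H|,X)+\sum_{\alpha}g(\overline{R}(e_\alpha,X)N,e_\alpha)
\]
for every tangent $X$. The crucial observation is that the correction vanishes: expanding $\overline{R}(e_\alpha,X)N$ from $(\ref{66})$ with $\eta(N)=g(X,N)=0$ and $\varphi N=-V$ collapses the sum to a multiple of $g(\varphi X,V)$, and $g(\varphi X,V)=-g(\varphi X,\varphi N)=-(g(X,N)-\eta(X)\eta(N))=0$. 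Hence $\sum_{\alpha}(\nabla_{e_\alpha}A)e_\alpha=2m\,\textsf{grad}|H|$, and the tangent part of $\Delta H$ becomes $2\,\textsf{trace}\,A_{\nabla^{\perp}_{(.)} H}(.)+2m|H|\,\textsf{grad}|H|$.

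Finally I would read off the two components of the decoupled identity. Collecting the normal terms reproduces the first equation, in which the constant is $l=k-2$; a direct simplification confirms $l=\frac{1}{4}(15-6m-c(3+2m))-2=\frac{-(2m+3)c-6m+7}{4}$, as stated. Collecting the tangential terms gives
\[
2\,\textsf{trace}\,A_{\nabla^{\perp}_{(.)} H}(.)+2m|H|\,\textsf{grad}|H|+2g(\textsf{grad}|H|,V)\xi-2\eta(\textsf{grad}|H|)V=0,
\]
which after dividing by $2$ is exactly the second equation. Conversely, the same component bookkeeping shows that the two displayed equations together make every part of $\tau_2^{\star}(\psi)$ vanish, so they are equivalent to Tanaka-Webster biharmonicity.
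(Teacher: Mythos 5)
Your proposal is correct and follows essentially the same route as the paper: substitute Lemmas \ref{6.44} and \ref{6.43} into $\tau_2^{\star}(\psi)=0$, decompose $\Delta H$ through the Gauss--Weingarten formulas and the traced Codazzi identity (your explicit check that the tangential part of $\textsf{trace}\,\overline{R}(d\psi(.),H)d\psi(.)$ vanishes is exactly what the paper invokes via Lemma \ref{6.43}), and then split into normal and tangential components, with $l=k-2$. The only caveat is a sign-bookkeeping point you inherit from the paper itself: with the convention $\Delta^{\perp}=-\textsf{trace}(\nabla^{\perp})^2$ your decomposition gives the normal component as $\Delta^{\perp}H=-\textsf{trace}\,B((.),A_H(.))-lH$ rather than the displayed $\Delta^{\perp}H=\textsf{trace}\,B((.),A_H(.))+lH$, matching the statement only if one flips the sign of $\Delta^{\perp}$ (the paper does this implicitly by writing $\Delta H=-\Delta^{\perp}H+\cdots$), so it is worth stating your convention explicitly rather than asserting that the normal part "reproduces the first equation".
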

\begin{proof}
By taking into account, the Definition $\ref{6.3}$, the Lemma $\ref{6.44}$ and Lemma $\ref{6.43}$  we have
\begin{eqnarray}\label{6.45}
\tau_2^{*}(\psi)= -(\Delta H + 2g(\textsf{grad}|H|,V)\xi-2\eta(\textsf{grad}|H|)V - 2H)-kH=0,
\end{eqnarray}
where
\begin{eqnarray}\label{6.13}
\Delta H &=& -\sum_{\alpha=1}^{2m}\overline{\nabla}_{e_\alpha}\overline{\nabla}_{e_\alpha} H\\
&=&-\Delta^{\perp} H + \textsf{trace}B(.,A_H .)+ \textsf{trace} A_{\nabla^{\perp}_{(.)}H}(.)+ \textsf{trace}\nabla A_H(.,.),\nonumber
\end{eqnarray}
in more details
\begin{eqnarray*}
\overline{\nabla}_{e_\alpha} \overline{\nabla}_{e_\alpha} H &=&\nabla^{\perp}_{e_\alpha}\nabla^{\perp}_{e_\alpha} H\nonumber \\ &&-A_{\nabla^\perp_{e_\alpha}H}e_\alpha-\nabla_{e_\alpha}A_H(e_\alpha)-B(e_\alpha, A_H(e_\alpha)),
\end{eqnarray*}
and
\begin{eqnarray*}\label{6.14}
\textsf{trace} \nabla A_H(.,.)&=& m  \textsf{grad}|H|^2\\
&&+ (\textsf{trace} \overline{R}(d\psi(.), H)d\psi(.))^\top +\textsf{trace} A_{\nabla^{\perp}_{(.)}H}(.)\nonumber.
\end{eqnarray*}
Also, the Lemma $\ref{6.43}$ yields that the tangent part of $ \textsf{trace} \overline{R}(d\psi(.), H)d\psi(.)$ Vanishes. Now, putting the above equations together, replacing them in the equation $(\ref{6.45})$, and splitting the normal and tangent part of it, then we reach the result as it was claimed.
\end{proof}
Regarding the mean curvature, from Theorem $\ref{6.16}.$ we can have the following result.

\begin{cor}\label{6.36}
Let $M^{2m}$ be a Tanaka-Webster biharmonic hypersurface with the constant mean curvature, then the sectional curvature of a Sasakian space form holds
\begin{eqnarray*}
c> \frac{-6m+7}{2m+3}.
\end{eqnarray*}
\end{cor}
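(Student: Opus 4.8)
The plan is to specialize the biharmonicity system of Theorem~\ref{6.16} to the constant mean curvature case and reduce it to a single scalar positivity condition. Since $|H|$ is constant we have $\textsf{grad}|H|=0$; moreover, because the normal bundle of a hypersurface is one-dimensional, $\nabla^{\perp}_{e_\alpha}H=e_\alpha(|H|)N=0$ for every $\alpha$, so that $\Delta^{\perp}H=0$ and $\textsf{trace}\,A_{\nabla^{\perp}_{(.)}H}(.)=0$. Consequently the second equation of the system of Theorem~\ref{6.16} holds trivially, and the first collapses to
\begin{eqnarray*}
\textsf{trace}\,B((.),A_H(.))+lH=0.
\end{eqnarray*}

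Next I would evaluate the trace term explicitly. Writing $H=|H|N$ we have $A_H=|H|A$, and for a hypersurface $B(X,Y)=g(AX,Y)N$, so that
\begin{eqnarray*}
\textsf{trace}\,B((.),A_H(.))=\sum_{\alpha=1}^{2m}g(Ae_\alpha,A_H e_\alpha)N=|H|\,\textsf{trace}(A^2)\,N.
\end{eqnarray*}
Substituting this together with $lH=l|H|N$ into the reduced equation yields $|H|\big(\textsf{trace}(A^2)+l\big)N=0$.

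For a proper (non-minimal) biharmonic hypersurface one has $|H|\neq0$, hence $\textsf{trace}(A^2)=-l$. The substantive point — really the only one — is the positivity observation: since $|H|\neq0$ the hypersurface is not totally geodesic, so $A\neq0$ and $\textsf{trace}(A^2)=\sum_i\lambda_i^2>0$ where the $\lambda_i$ are the principal curvatures. Therefore $-l>0$, that is,
\begin{eqnarray*}
-l=\frac{(2m+3)c+6m-7}{4}>0,
\end{eqnarray*}
and rearranging gives $(2m+3)c>7-6m$, whence $c>\frac{-6m+7}{2m+3}$, as claimed. I expect no real obstacle here beyond keeping the sign conventions for $A_H$ and $B$ consistent with those fixed in Theorem~\ref{6.16}, and noting that $|H|\neq0$ is exactly what forces the strict inequality.
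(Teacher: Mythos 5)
Your proposal is correct and follows essentially the same route as the paper: with $|H|$ constant the normal equation of Theorem \ref{6.16} reduces to $|B|^2=\textsf{trace}(A^2)=-l$, and positivity of $|B|^2$ (forced by $|H|\neq 0$) gives the stated inequality. You spell out the trace computation and the need for $|H|\neq 0$ more explicitly than the paper does, but the argument is the same.
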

\begin{proof}
Let $M^{2m}$ be a Tanaka-Webster biharmonic hypersurface in the Sasakian space form with the constant mean curvature $|H|= constant\neq 0$, then the Theorem $\ref{6.16}$ yields 
\begin{eqnarray*}
|B|^2=-l=-\frac{7-c(2m+3)-6m}{4},\nonumber
\end{eqnarray*}
which implies the result.
\end{proof}

\section{Tanaka-Webster biharmonic pseudo Hopf hypersurfaces }
 Let $x:( M^{2m}, g)\rightarrow (\overline{M}^{2m+1}(c),\overline{g})$ be an isometric immersion from a real hypersurface $M^{2m}$ in the Sasakian space form $\overline{M}^{2m+1}(c)$. We underline the ambient manifold is equipped with the Tanaka-Webster connection $\nabla^{\star}$ as well. Let $\overline{\nabla}$ and $\nabla$ denote the Levi-Civita connections on $\overline{M}^{2m+1}(c)$ and $M^{2m}$, respectively. We recall that $\xi$ and $V$ are tangent on $M^{2m}$. Then we have
%\begin{eqnarray}
$T(M^{2m})=D \oplus D^{\perp}$,
%\end{eqnarray}
where $D$ is a maximal $\varphi-$invariant distribution and $D^{\perp}= \textsf{Span}\{\xi, V \}$. Suppose that the  Weingarten operator $A$ satisfies $A D^\perp \subseteq D^\perp$ and $AD \subseteq D$. A hypersurface $M^{2m}$ is called a pseudo-Hopf hypersurface, provided that the Weingarten operator $A$ be invariant on $\textsf{Span}\{V, \xi\}$ (see \cite{Abedi}).
Supposed that $W_1 , W_2 \in \textsf{Span} \{\xi, V \}$ are eigenvectors of the Weingarten operator $A$ in which $AW_1=\gamma_1 W_1$ and $AW_2= \gamma_2 W_2$ where
\begin{eqnarray}\label{6.00}
& W_1=\xi \cos\theta + V \sin\theta,  \ \
& W_2=-\xi \sin\theta+ V \cos \theta
\end{eqnarray}
 for some $ 0 < \theta <\frac{\pi}{2}$, where $\gamma_1=-\tan \theta$ and $\gamma_2= \cot \theta$. Let, $AV= \alpha \xi+ \beta V$, then we have $\alpha=-1$ and $\beta= \frac{\cos 2\theta}{\cos \theta \sin \theta}$.

%Furthermore, here we taking into account the Codazzi equation as following
\begin{lem}\label{Am}
Let $(M^{2m}, g)$ be a hypersurface in a Sasakian space form $(\overline{M}^{2m+1}(c), \overline{g})$. Then the Coddazi equation for $X, Y$ and $Z$ tangent on $T(M^{2m})$ holds 
\begin{eqnarray}\label{6.20}
&&g((\nabla_X A)Y-(\nabla_Y A)X, Z)=\nonumber\\
&&-\frac{c-1}{4} \overline{g}\big(\overline{g}(\varphi X, Z)Y + 2\overline{g}(\varphi X, Y)Z- \overline{g}(\varphi Y, Z)X, V\big),
\end{eqnarray}
where $\nabla$ and $A$ are the Levi-Civita connection and the shape operator of $M^{2m}$.
\end{lem}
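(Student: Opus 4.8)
The plan is to specialize the general Codazzi equation for a hypersurface to the explicit curvature tensor $(\ref{66})$ of the Sasakian space form and then project onto the unit normal $N$.

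First I would recall that for an isometric immersion of a hypersurface with unit normal $N$ and shape operator $A$ (so that $\overline{\nabla}_X N=-AX$, since the normal part of $\overline{\nabla}_X N$ vanishes for a unit normal), the Gauss--Codazzi formalism gives the Codazzi identity
\begin{eqnarray*}
\overline{g}(\overline{R}(X,Y)Z, N)= g((\nabla_X A)Y-(\nabla_Y A)X, Z),
\end{eqnarray*}
for all $X,Y,Z$ tangent to $M^{2m}$. This follows by taking the normal component of $\overline{R}(X,Y)Z$ and using $B(Y,Z)=g(AY,Z)N$ together with $\nabla^{\perp}N=0$. Hence the left-hand side of $(\ref{6.20})$ equals $\overline{g}(\overline{R}(X,Y)Z,N)$, and it only remains to evaluate this quantity using $(\ref{66})$.

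Next I would substitute $(\ref{66})$ and take the inner product with $N$. The structural facts I would use are that $\xi$ and $V$ are tangent to $M^{2m}$, so $\overline{g}(\xi,N)=0$ and $\overline{g}(W,N)=0$ for every $W$ tangent to $M^{2m}$; and that the skew-symmetry of $\varphi$ combined with $V=-\varphi N$ yields
\begin{eqnarray*}
\overline{g}(\varphi X, N)=-\overline{g}(X,\varphi N)=\overline{g}(X,V)
\end{eqnarray*}
for tangent $X$. With these, every term of $(\ref{66})$ whose slot paired with $N$ carries a factor of $X$, $Y$, or $\xi$ has vanishing $N$-component: the summand $\eta(Z)[\eta(Y)X-\eta(X)Y]$, the summand $[g(Y,Z)\eta(X)-g(X,Z)\eta(Y)]\xi$, and the whole $\frac{c+3}{4}$ block $g(Y,Z)X-g(X,Z)Y$ all drop out. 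What survives are exactly the three $\varphi$-terms, which by the displayed identity become $\overline{g}(\varphi X, Z)\overline{g}(Y,V)+2\overline{g}(\varphi X, Y)\overline{g}(Z,V)-\overline{g}(\varphi Y, Z)\overline{g}(X,V)$; repackaging these three pairings as a single inner product against $V$ reproduces precisely the right-hand side of $(\ref{6.20})$.

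I expect the main obstacle to be purely bookkeeping: keeping the sign conventions for $\overline{R}$ (as fixed by $R^{\star}(X,Y)=[\nabla^{\star}_X,\nabla^{\star}_Y]-\nabla^{\star}_{[X,Y]}$) and for $A$ consistent between the Codazzi identity and $(\ref{66})$, and applying $\overline{g}(\varphi X, N)=\overline{g}(X,V)$ correctly in all three surviving terms. The conceptually clarifying point, requiring no computation, is that the constant-sectional-curvature part $\frac{c+3}{4}$ of $\overline{R}$ is purely tangential and so contributes nothing to the Codazzi equation; only the genuinely Sasakian part carrying the factor $\frac{c-1}{4}$ remains, which is why that coefficient appears in the final formula.
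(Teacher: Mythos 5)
Your argument is correct: the paper states Lemma \ref{Am} without any proof, and your derivation—taking the normal component of the Gauss--Codazzi identity to get $\overline{g}(\overline{R}(X,Y)Z,N)=g((\nabla_X A)Y-(\nabla_Y A)X,Z)$, substituting the curvature tensor \eqref{66}, discarding the purely tangential terms (including the whole $\frac{c+3}{4}$ block, since $\xi$ is tangent), and converting the surviving $\varphi$-terms via $\overline{g}(\varphi\,\cdot\,,N)=\overline{g}(\cdot\,,V)$—is exactly the standard computation the authors evidently intend, with all signs consistent with their conventions $\overline{\nabla}_X N=-AX$ and $R(X,Y)=[\nabla_X,\nabla_Y]-\nabla_{[X,Y]}$.
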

 %So, by taking into account the above assumption, we have the following
\begin{lem}\label{6.32}
Let $M^{2m}$ be a pseudo Hopf hypersurface of Sasakian space form $\overline{M}^{2m+1}(c)$. If the Weingarten operator $A$ for some $X\in D$ satisfies $AX=\lambda X$, then $\varphi X$ is an eigenvector corresponding to the eigenvalue which yields
\begin{eqnarray}
A\varphi X=\frac{2\beta \lambda +c+3}{4\lambda-2\beta}\varphi X.
\end{eqnarray}
\end{lem}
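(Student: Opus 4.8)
The plan is to reduce the statement to a single operator identity on the holomorphic distribution $D$ and then evaluate that identity on the eigenvector $X$. I would first record the two hypersurface structure equations I will need. Combining the Sasakian condition $(\overline{\nabla}_X\varphi)Y=g(X,Y)\xi-\eta(Y)X$ with $(\ref{6.2})$, the definition $V=-\varphi N$, and the Gauss--Weingarten formulas $\overline{\nabla}_XY=\nabla_XY+g(AX,Y)N$, $\overline{\nabla}_XN=-AX$, a short computation gives, for every $X\in D$,
\[
\nabla_X\xi=-\varphi X,\qquad \nabla_XV=\varphi(AX),
\]
where I use that $\varphi X\in D$ and $\varphi(AX)\in D$ because $AD\subseteq D$. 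From the eigendata $(\ref{6.00})$ one reads off the pseudo-Hopf relations $A\xi=-V$ and $AV=-\xi+\beta V$ with $\beta=g(AV,V)$. These at once give $g(A\varphi X,\xi)=g(\varphi X,A\xi)=-g(\varphi X,V)=0$ and $g(A\varphi X,V)=g(\varphi X,AV)=0$, so $A\varphi X\in D$; it therefore remains to show that $A\varphi X$ is a multiple of $\varphi X$ and to identify the factor.

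The engine of the proof is the identity
\[
A\varphi A=\frac{c+3}{4}\,\varphi+\frac{\beta}{2}\,(\varphi A+A\varphi)\qquad\text{on }D,
\]
which I would extract from the Codazzi equation of Lemma~\ref{Am} applied to $X,Y\in D$ with $Z=V$. On the right-hand side, since $g(\varphi X,V)=g(X,V)=0$ for vectors of $D$, only the middle term survives and yields $-\frac{c-1}{2}g(\varphi X,Y)$. On the left-hand side I would transfer the derivative onto the fixed field $V$ using the self-adjointness of $\nabla_XA$, writing $g((\nabla_XA)Y-(\nabla_YA)X,V)=g((\nabla_XA)V,Y)-g((\nabla_YA)V,X)$; this is the decisive step, because it removes all uncontrolled derivatives along $V$ and leaves only the computable quantity
\[
(\nabla_XA)V=\varphi X+(X\beta)V+\beta\,\varphi AX-A\varphi AX,
\]
obtained directly from $AV=-\xi+\beta V$ and the two structure equations. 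Antisymmetrising in $X\leftrightarrow Y$, the $(X\beta)V$ contributions drop against $D$, and using that $\varphi$ and $A\varphi A$ are skew-adjoint while $A$ is self-adjoint, the three bracketed expressions collapse to $2g(\varphi X,Y)$, $g((\varphi A+A\varphi)X,Y)$ and $2g(A\varphi AX,Y)$; comparison with $-\frac{c-1}{2}g(\varphi X,Y)$ produces the displayed identity, with the constant $2+\frac{c-1}{2}=\frac{c+3}{4}\cdot 2$ accounting for the numerator.

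Finally I would evaluate the identity on the eigenvector. With $AX=\lambda X$ one has $A\varphi AX=\lambda A\varphi X$ and $(\varphi A+A\varphi)X=\lambda\varphi X+A\varphi X$, so the identity reads $(\lambda-\frac{\beta}{2})A\varphi X=(\frac{c+3}{4}+\frac{\beta\lambda}{2})\varphi X$. Solving for $A\varphi X$ gives
\[
A\varphi X=\frac{2\beta\lambda+c+3}{4\lambda-2\beta}\,\varphi X,
\]
which is the assertion and, in particular, exhibits $\varphi X$ as a principal direction.

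I expect the main obstacle to be precisely the term $(\nabla_VA)X$: a direct expansion introduces the unknown covariant derivative $\nabla_VX$, and the argument closes only because antisymmetrising the Codazzi equation over $D$ and moving the derivative onto the structure field $V$ (via self-adjointness of $\nabla_XA$) forces those terms to cancel identically. Tracking the skew-adjointness of $\varphi$ and of $A\varphi A$ through that cancellation is the delicate bookkeeping, and it is exactly the pseudo-Hopf hypotheses $A\xi=-V$, $AV=-\xi+\beta V$ that make the computation of $(\nabla_XA)V$ closed in the first place.
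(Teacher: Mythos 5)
Your proposal is correct and follows essentially the same route as the paper: covariantly differentiate $AV=-\xi+\beta V$ along $X\in D$ using $\nabla_X\xi=-\varphi X$ and $\nabla_XV=\varphi AX$, pair with $Y$, antisymmetrize in $X,Y$ so that the self-adjointness of $\nabla_XA$ lets the Codazzi identity of Lemma~\ref{Am} replace the derivative terms by $-\frac{c-1}{2}g(\varphi X,Y)$, and then solve for $A\varphi X$ on the eigenvector. Your intermediate operator identity $A\varphi A=\frac{c+3}{4}\varphi+\frac{\beta}{2}(\varphi A+A\varphi)$ on $D$ is exactly the paper's penultimate displayed equation written in operator form, and the final coefficient $\frac{2\beta\lambda+c+3}{4\lambda-2\beta}$ agrees.
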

%where $\lambda$ and $c$ are eigenvalue of Weingarten operator corresponding to eigenvector $X$ and constant $\varphi$-sectional curvature, respectively.
\begin{proof}
Let $X, Y\in D$ be the eigenvectors of the Weingarten operator $A$. We consider $AV= -\xi+ \beta V$ and take the covariant derivative of both sides, then 
\begin{eqnarray*}
(\nabla_X A)V+ A\nabla_X V= X(\beta)V+ \beta \nabla_X V +\varphi X,
\end{eqnarray*}
where $\nabla$ denotes the Levi-Civita connection on $M^{2m}$ and $\nabla_X V=\varphi A X$ \cite{Abedi}. Hence, we get
\begin{eqnarray*}\label{6.18}
g((\nabla_X A)V, Y)+ g(A\varphi AX, Y)= \beta g(\varphi AX, Y)+g(\varphi X, Y),
\end{eqnarray*}
similarly
\begin{eqnarray*}\label{6.19}
g((\nabla_Y A)V, X)+ g(A\varphi AY, X)= \beta g(\varphi AY, X)+g(\varphi Y, X),
\end{eqnarray*}
so
\begin{eqnarray*}
g((\nabla_X A)Y- (\nabla_Y A)X, V)+2g(A\varphi AX, Y)= \nonumber\\ \beta g(\varphi AX, Y)
                                                       +\beta g(A\varphi X, Y)+2g(\varphi X, Y),\nonumber
\end{eqnarray*}
then by applying the Lemma \ref{Am} we obtain
\begin{eqnarray*}
-\frac{c-1}{2}g(\varphi X, Y)+2g(A\varphi AX, Y)=\nonumber\\ \beta g(\varphi AX, Y) 
                                                    +\beta g(A\varphi X, Y)+2g(\varphi X, Y),\nonumber
\end{eqnarray*}
consequently we have
\begin{eqnarray*}
(2\lambda-\beta)g(A\varphi X, Y)=(\beta \lambda +2 + \frac{c-1}{2})g(\varphi X,Y),
\end{eqnarray*}
so the claim is reached.
\end{proof}
Now we discuss about the Tanaka-Webster biharmonic pseudo Hopf hypersurfaces in more details. 
\begin{thm}
There exists no Tanaka-Webster biharmonic pseudo Hopf hypersurface in such away that $\textsf{grad} |H|$ is in the direction of the vector field in $D$. 
\end{thm}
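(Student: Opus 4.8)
The plan is to argue by contradiction: assume $M^{2m}$ is a Tanaka-Webster biharmonic pseudo Hopf hypersurface on which $\textsf{grad}|H|$ is a \emph{nonzero} field pointing in the direction of some $X\in D$, and deduce that in fact $\textsf{grad}|H|=0$. First I would specialize the tangential (second) equation of Theorem \ref{6.16}. Since $M^{2m}$ is a hypersurface we have $H=|H|N$, hence $A_H=|H|A$, and the normal bundle is a line bundle, so $\nabla^{\perp}_{e_\alpha}H=e_\alpha(|H|)N=g(\textsf{grad}|H|,e_\alpha)N$ and therefore $\textsf{trace}\,A_{\nabla^{\perp}_{(.)}H}(.)=A(\textsf{grad}|H|)$. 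Because $X=\textsf{grad}|H|\in D$ is orthogonal to both $\xi$ and $V$, the terms $g(\textsf{grad}|H|,V)\xi$ and $\eta(\textsf{grad}|H|)V$ drop out, and the tangential equation collapses to
\[ A(\textsf{grad}|H|)=-m|H|\,\textsf{grad}|H|. \]
Thus on the open set where $\textsf{grad}|H|\neq0$ the unit field $E:=\textsf{grad}|H|/|\textsf{grad}|H||\in D$ is principal with principal curvature $\lambda=-m|H|$, and $\lambda$ is non-constant there.

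Next I would invoke Lemma \ref{6.32}: since $E\in D$ is principal with eigenvalue $\lambda$, the orthonormal field $\varphi E\in D$ is principal with eigenvalue $\mu=\frac{2\beta\lambda+c+3}{4\lambda-2\beta}$, equivalently $2\lambda\mu-\beta(\lambda+\mu)=\frac{c+3}{2}$. I would record the two elementary facts that drive the computation. The eigenvalue-derivative identity $Y(\lambda)=g((\nabla_Y A)E,E)$, valid for every $Y$, isolates the single nonzero derivative $E(\lambda)=-m\,|\textsf{grad}|H||\neq0$, while $\varphi E(\lambda)=-m\,g(\textsf{grad}|H|,\varphi E)=0$. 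Alongside this I would use the structure equations on $M^{2m}$ inherited from the Sasakian ambient, namely $\nabla_X\xi=-\varphi X$ and $\nabla_X V=\varphi AX$ for $X\in D$, together with $A\xi=-V$, which follows from $(\ref{6.50})$ after dividing by $|H|$.

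The core of the argument is to feed the pairs drawn from $\{E,\varphi E,\xi,V\}$ into the Codazzi equation of Lemma \ref{Am}. Because $g(E,V)=g(\varphi E,V)=\eta(E)=\eta(\varphi E)=0$, its right-hand side either vanishes or reduces to an explicit constant multiple of $c-1$; expanding the left-hand side with the structure equations above turns each instance into a scalar relation among the connection coefficients $g(\nabla_E\varphi E,E)$, $g(\nabla_{\varphi E}\varphi E,E)$ and the derivatives $E(\mu),\varphi E(\mu)$. In parallel I would exploit the symmetry of the Hessian of $|H|$, $g(\nabla_E\,\textsf{grad}|H|,\varphi E)=g(\nabla_{\varphi E}\,\textsf{grad}|H|,E)$, to produce one further relation linking $\varphi E(|\textsf{grad}|H||)$ to these same coefficients.

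The hard part will be the bookkeeping in this last step. The ``diagonal'' Codazzi instances (with a repeated entry) turn out to be identically satisfied once $\mu=\mu(\lambda)$ is imposed, so they merely reproduce Lemma \ref{6.32} and yield nothing new; the contradiction must be extracted from the mixed instances in combination with the non-constancy of $\lambda$. Concretely, I expect the resulting system to force $E(\lambda)=0$, which is incompatible with $E(\lambda)=-m\,|\textsf{grad}|H||\neq0$; equivalently it forces $|\textsf{grad}|H||=0$, contradicting the standing hypothesis that $\textsf{grad}|H|$ is a nonzero field in $D$. A delicate point to settle is the behaviour of the contact angle $\theta$, hence of $\beta$, in the $E$ and $\varphi E$ directions: one must check, using the $\nabla V$ and $\nabla\xi$ equations, that the derivatives of $\beta$ do not conspire to cancel the term carrying $E(\lambda)$. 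Resolving this establishes the claimed nonexistence.
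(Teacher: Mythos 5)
Your reduction of the tangential equation of Theorem \ref{6.16} to $A(\textsf{grad}|H|)=-m|H|\,\textsf{grad}|H|$ is correct and is exactly the paper's first step, as is the appeal to Lemma \ref{6.32} to make $\varphi E$ principal. But the proof stops being a proof precisely where the theorem is actually proved: you write that you ``expect the resulting system to force $E(\lambda)=0$'' and that a ``delicate point'' about the derivatives of $\theta$ remains ``to settle.'' That expected contradiction is never derived, and it is not the contradiction that actually materialises. The paper does not show $E(\lambda)=0$; instead it runs the Codazzi equation through the full Cartan apparatus on an adapted frame $\{e_1,\dots,e_{m-1},\varphi e_1,\dots,\varphi e_{m-1},W_1,W_2\}$ to prove first that $\lambda_j\neq\lambda_1$ for all $j\ge 2$ (this uses $e_1(\lambda_1)\neq 0$ essentially) and then that $\omega_{ij}^1=0$ for all distinct $i,j\ge 2$. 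The punchline is then a direct computation of $\nabla_{\varphi e_1}W_2$ and $\nabla_{\varphi e_1}W_1$ using $\nabla\xi=-\varphi(\cdot)$ and $\nabla V=\varphi A(\cdot)$: the vanishing of the two coefficients $\omega_{m,2m}^1$ and $\omega_{m,2m-1}^1$ forces the eigenvalue $\overline{\lambda}_1$ of $\varphi e_1$ to equal both $-\tan\theta$ and $\cot\theta$, which is impossible for $0<\theta<\pi/2$.

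Concretely, two things are missing from your outline. First, restricting the Codazzi analysis to pairs drawn from $\{E,\varphi E,\xi,V\}$ is too narrow: the vanishing $\omega_{ij}^1=0$ that drives the contradiction is obtained by combining the Codazzi relations $(\lambda_i-\lambda_j)\omega_{ki}^j=(\lambda_k-\lambda_j)\omega_{ik}^j$ over \emph{all} indices with the symmetry $\omega_{ij}^1=\omega_{ji}^1$ coming from $[e_i,e_j]\lambda_1=0$, and it requires the preliminary fact that $\lambda_1=-m|H|$ is a simple eigenvalue in the sense that $\lambda_j\neq\lambda_1$ for $j\ge2$. Second, the mechanism you propose (forcing $E(\lambda)=0$) is unsubstantiated, and the ``delicate point'' you flag about the derivatives of $\theta$ is precisely the kind of cancellation question that cannot be waved away; in the paper's argument the $\theta$-derivative terms land in the $W_1,W_2$ components and are irrelevant, because the contradiction is read off from the $e_1$-component alone. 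As written, your proposal is a plausible plan of attack whose decisive step is conjectural, so it does not establish the theorem.
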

\begin{proof}
We denote by $\psi: M^{2m}\rightarrow \overline{M}^{2m+1}(c)$ an isometric immersion where $M^{2m}$ is a Tanaka-Webster biharmonic pseudo Hopf hypersurface in a Sasakian space form $\overline{M}^{2m+1}(c)$. According to the assumption \textsf{grad}$|H|$ is in the direction of vectors in $D$. Then, by applying Theorem $\ref{6.16}$ directly, we get
\begin{eqnarray}
\left\{
  \begin{array}{ll}
   \hbox{$\Delta^{\perp}H = \textsf{trace} B((.), A_H(.)) +lH $;} \\
    \hbox{$\textsf{trace} A_{\nabla^{\perp}_{(.)} H}(.)+m |H|\textsf{grad} |H| =0,$}
  \end{array}
\right.
\end{eqnarray}
where $A \textsf{grad}|H|= -m|H| \textsf{grad} |H|$ is deduced by the second term. In other words \textsf{grad}$|H|$ is an eigenvector of the Weingarten operator $A$ corresponding to the eigenvalue $-m|H|$. Also, the previous Lemma $\ref{6.32}$ lets the Weingarten operator $A$ of a pseudo Hopf hypersurface $M^{2m}$ takes the following form with respect to a suitable orthogonal frame field $\{e_1, ...,e_{m-1}, e_m=\varphi e_1, ...,e_{2m-2}= \varphi e_{m-1}, e_{2m-1}=W_1, e_{2m}=W_2 \}$ in which
\begin{eqnarray}\label{6.21}
&&Ae_i= \lambda_i e_i, \ \ i=1, . . ., m-1\nonumber\\
&&A\varphi e_i= \overline{\lambda_i}\varphi e_i,  \ \ i= 1, ..., m-1\\
&&AW_1=- \gamma_1 W_1 ,\ \ \  AW_2=\gamma_2 W_2\nonumber
\end{eqnarray}
where, $\lambda_i $ and $\overline{\lambda_i}= \frac{2\beta \lambda_i +c+3}{4\lambda_i-2\beta}$ are the eigenvalues corresponding to the eigenvectors $e_i$ and $\varphi e_i$, respectively. We recall that $\gamma_1=-\tan \theta $ and $\gamma_2=\cot \theta $, consequently we get $\gamma_1 \gamma_2=-1$. Let $e_1=\frac{ \textsf{grad}|H|}{|\textsf{grad}|H||}$. Assume that \textsf{grad}$|H|$ is given by
% \begin{eqnarray*}
 $\textsf{grad}|H|= \sum_{i=1} ^{2m} e_i(|H|)e_i$.
%\end{eqnarray*}
 Then
\begin{eqnarray}\label{6.22}
 {e_1(|H|)\neq 0, \ \ \ \ \ \ \ \ \ e_i(|H|)=0, \ \ \ \ \ \ \ \ \ i=2, ..., 2m.}
\end{eqnarray}
 Also, it is  written
\begin{eqnarray}\label{6.23}
& \nabla_{e_i} e_j =\sum_{k=1} ^{2m} \omega_{ij} ^ke_k,
\end{eqnarray}
where $\omega_{ij} ^k$ is called Cartan coefficient. Then computing the compatibility condition $\nabla_{e_k} \langle e_i, e_j\rangle =0,$ which denotes
\begin{eqnarray}
\omega_{ki}^i &=& 0,\ \  i = j  \label{6.25}\\
\omega_{ki}^j+\omega_{kj}^i &=0& , \ \ i\neq j, \ \  i,j,k = 1,...,2m.\label{6.26}
\end{eqnarray}
 Morevere, from the Codazzi equation, and taking $(\ref{6.21})$ and $(\ref{6.23})$ we get
% \begin{eqnarray}
 %(\nabla_{e_k} A)e_i &=& (\nabla_{e_i}A)e_k\nonumber\\
 %\nabla_{e_k} Ae_i -A\nabla_{e_k}e_i &=& \nabla_{e_i}Ae_k -A\nabla_{e_i}e_k,\nonumber
 %\end{eqnarray}
 \begin{eqnarray}
 e_k(\lambda_i)e_i +(\lambda_i -\lambda_j)\omega_{ki}^je_j=e_i(\lambda_k)e_k +(\lambda_k-\lambda_j)\omega_{ik}^je_j,\nonumber
 \end{eqnarray}
 we multiply both sides of the above equation to $e_j$, then we arrive at the following equations
 \begin{eqnarray}
e_i(\lambda_j)&=&(\lambda_i- \lambda_j)\omega_{ji}^j\label{6.27} \\
(\lambda_i- \lambda_j)\omega_{ki}^j&=&(\lambda_k -\lambda_j)\omega_{ik}^j,\label{6.28}
 \end{eqnarray}
 for distinct $i,j,k=1,...,2m$. From $\lambda_1=-m|H|$ and $(\ref{6.22})$ we obtain
\begin{eqnarray}
e_1(\lambda_1) \neq 0, \ \ \ \ \ \ \ \  e_i(\lambda_1)=0,\ \ \ \ \ i=2,...,2m\label{6.29}
\end{eqnarray}
and
\begin{eqnarray}
%[e_i, e_j]\lambda_1=0, \ \ \ \ \ \ \ \ \ \ \ \  2 \leq i,j \leq 2m , i \neq j
 0=[e_i, e_j]\lambda_1 = (\nabla_{e_i} e_j - \nabla_{e_j} e_i)\lambda_1,\ \ \ 2 \leq i,j \leq 2m , i \neq j
\end{eqnarray}
which implies
\begin{eqnarray}
&\omega_{ij}^1 =\omega_{ji}^1, \label{6.30}	
\end{eqnarray}
 for distinct $i,j=2,..., 2m$.
It is claimed that, $\lambda_j \neq \lambda_1$ for $j=2,..., 2m$. Since, if $\lambda_j =\lambda_1$ for $j\neq 1$, utilize $(\ref{6.27})$ and put $i=1$
\begin{eqnarray*}
0= (\lambda_1- \lambda_j)\omega_{j1} ^j=e_1(\lambda_j)=e_1(\lambda_1),
\end{eqnarray*}
which contradicts to $(\ref{6.29})$. For $j=1$ and $ k, i\neq 1$ from $(\ref{6.28})$ we get
\begin{eqnarray}
 (\lambda_i -\lambda_1)\omega_{ki}^1=(\lambda_k - \lambda_1)\omega_{ik}^1
\end{eqnarray}
which together with $(\ref{6.30})$ yield
\begin{eqnarray}
& \omega_{ij}^1 =0, \ \ \ \ \ \ \ \ \ \ i\neq j, \ \ \ \ \ \ \ i,j= 2,... ,2m \label{6.31}
\end{eqnarray}
that combining with the equation $(\ref{6.26})$, implies $\omega_{i1}^j=0$, $i\neq j$ , $i,j= 2,...,2m$.\\

After all, by summarizing and considering the above equations and applying appropriate connections we will reach to a contradiction. Indeed, by using the equation $(\ref{6.23})$ we have %\begin{eqnarray}\label{2.11}
%& \nabla_{e_m} e_{2m} = \sum_{k=1}^{2m}\omega_{m 2m}^k e_k
%\end{eqnarray}
%where $\omega_{m 2m}^{2m}=0$ and $\omega_{m 2m}^{1}=0$ due to $(\ref{6.25})$ and $(\ref{6.31})$, respectively. On the other hand, with respect to $(\ref{6.00})$ and the assumption what $\varphi e_{i+1-m}=e_i$ for $i> m-1$, then we get
\begin{eqnarray}\label{6.41}
\nabla_{e_m} W_2 &=& \nabla_{e_m} (-\xi \sin\theta +V \cos \theta)\nonumber \\
&=& -e_m(\sin \theta)\xi - \sin (\theta) \nabla_{e_m} \xi + e_m(\cos \theta)V + \cos(\theta) \nabla_{e_m} V\nonumber \\
&=&-e_m(\sin \theta)(W_1 \sin \theta + W_2 \cos \theta)- \sin \theta (-\varphi e_m)\nonumber \\
 &+& e_m(\cos \theta)(W_1 \cos \theta - W_2 \sin \theta) + \cos \theta(\varphi Ae_m)\nonumber \\
&=&(- e_m (\sin \theta)\sin \theta +e_m(\cos \theta)\cos\theta)W_1 -(\sin \theta + \overline{ \lambda_1}\cos \theta)e_1\nonumber \\
&-&(e_m (\sin \theta ) \cos \theta +e_m (\cos \theta)\sin \theta)W_2.
\end{eqnarray}
On the one hand, from $(\ref{6.31})$ we can obtain the Cartan coefficient $\omega_{m 2m }^1=0 $, which is associated to the vector field $e_1$. Then, by taking this fact, from $(\ref{6.41})$ we get
\begin{eqnarray*}
& 0= \omega_{m 2m }^1= \sin \theta + \overline{\lambda_1}\cos \theta,
\end{eqnarray*}
which implies
\begin{eqnarray}\label{2.12}
\overline{\lambda_1} = -\tan \theta,
\end{eqnarray}
where $\overline{\lambda}_1$ is an eigenvalue of the Weingarten operator corresponding to the eigenvector $e_m=\varphi e_1$.

Similarly, by computing the $\nabla_{e_m} W_1$ and applying the equation $(\ref{6.31})$ we have
\begin{eqnarray*}
0= \omega_{m 2m-1}^1= \cos \theta -\overline{\lambda_1}\sin \theta, \ \
\end{eqnarray*}
 in which 
\begin{eqnarray}\label{2.13}
\overline{\lambda_1}= \cot \theta.
\end{eqnarray}
Finally, a contradiction is made by $(\ref{2.12})$ and $(\ref{2.13})$. Hence, the Tanaka-Webster biharmonic pseudo Hopf hypersurfaces do not exist in a Sasakian space forms whenever \textsf{grad}$|H|$ be an eigenvector of the Weingarten operator in the distribution $D$.
\end{proof}
Also, we can have
\begin{prop}
Let $M^{2m}$ be a Thanaka-Webster biharmonic pseudo Hopf hypersurface, where \textsf{grad}$|H|\in D^{\perp}$. Then $M^{2m}$ is either a minimal hypersurface $(|H|=0)$ or $|H|=-\frac{\gamma_1+\gamma_2}{m}$, 
where $\gamma_1=-\tan \theta$ and $\gamma_2= \cot \theta$ are the eigenvalues of the Weingarten operator.
\end{prop}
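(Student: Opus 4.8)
The plan is to extract the conclusion from the \emph{tangential} component of the biharmonic system established in Theorem~\ref{6.16}, since the hypothesis $\textsf{grad}|H|\in D^{\perp}$ lives entirely in $\textsf{Span}\{\xi,V\}$. First I would simplify the term $\textsf{trace}\,A_{\nabla^{\perp}_{(.)}H}(.)$. Because $M^{2m}$ has codimension one, its normal bundle is the line bundle spanned by the unit normal $N$, so $\nabla^{\perp}_{X}N=0$ and therefore $\nabla^{\perp}_{X}H=X(|H|)N=g(\textsf{grad}|H|,X)N$. Consequently $A_{\nabla^{\perp}_{X}H}=X(|H|)A$, and the trace collapses to $\textsf{trace}\,A_{\nabla^{\perp}_{(.)}H}(.)=A(\textsf{grad}|H|)$. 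Hence the second equation of Theorem~\ref{6.16} reads $A(\textsf{grad}|H|)+m|H|\,\textsf{grad}|H|+g(\textsf{grad}|H|,V)\xi-\eta(\textsf{grad}|H|)V=0$.

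Next I would turn this into a scalar computation using the pseudo-Hopf structure. Writing $\textsf{grad}|H|=a\xi+bV$ with $a=\eta(\textsf{grad}|H|)$ and $b=g(\textsf{grad}|H|,V)$, I need the action of $A$ on $D^{\perp}$. The eigendata $AW_{1}=\gamma_{1}W_{1}$, $AW_{2}=\gamma_{2}W_{2}$ together with $(\ref{6.00})$ give $A\xi=-V$ (consistent with $(\ref{6.50})$ and $A_{H}=|H|A$), while the pseudo-Hopf hypothesis supplies $AV=-\xi+\beta V$. Substituting $A(\textsf{grad}|H|)=-b\xi+(-a+b\beta)V$ into the displayed tangential equation and collecting the $\xi$- and $V$-components yields the two scalar relations $m|H|\,a=0$ and $-2a+b(\beta+m|H|)=0$.

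Finally I would read off the dichotomy. If $|H|=0$ the hypersurface is minimal, which is the first alternative. If $|H|\neq 0$, the first relation forces $a=0$, so $\textsf{grad}|H|=bV$; the second relation then becomes $b(\beta+m|H|)=0$, and since a nonconstant $|H|$ means $b\neq 0$, I conclude $\beta+m|H|=0$. Using the identity $\beta=\gamma_{1}+\gamma_{2}$ (indeed $-\tan\theta+\cot\theta=\frac{\cos 2\theta}{\sin\theta\cos\theta}=\beta$) this is exactly $|H|=-\frac{\gamma_{1}+\gamma_{2}}{m}$, the second alternative. I expect the only delicate point to be the degenerate subcase $\textsf{grad}|H|=0$: there $|H|$ is constant and the tangential equation is vacuous, so this constant-mean-curvature situation must be excluded or treated separately (e.g.\ via Corollary~\ref{6.36}) for the stated dichotomy to be exhaustive; the remainder is routine bookkeeping in the $\{\xi,V\}$ frame.
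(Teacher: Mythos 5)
Your proposal is correct and follows essentially the same route as the paper: write $\textsf{grad}|H|=a\xi+bV$, substitute into the tangential equation of Theorem~\ref{6.16} using $A\xi=-V$ and $AV=-\xi+(\gamma_1+\gamma_2)V$, and split into $\xi$- and $V$-components to obtain $m|H|a=0$ and $-2a+b\big(\gamma_1+\gamma_2+m|H|\big)=0$, from which the dichotomy follows. Your closing caveat about the degenerate subcase $\textsf{grad}|H|=0$ is well taken --- the paper's proof silently divides by the $V$-coefficient at the final step --- but otherwise the two arguments coincide.
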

\begin{proof}
According to the assumption grad$|H|\in D^{\perp}$, so we can suppose that $\textsf{grad}|H|=\alpha \xi+ \beta V$. By using the Theorem $\ref{6.16}$ we have
\begin{eqnarray}\label{6.33}
\left\{
  \begin{array}{ll}
    \hbox{$\Delta^{\perp}H= \textsf{trace} B((.), A_H(.))+ lH$;} \\
     \hbox{$A \textsf{grad}|H|+m|H| \textsf{grad}|H|+\beta \xi-\alpha V=0$},
  \end{array}
\right.
\end{eqnarray}
%where $k=\frac{-(2m+3)c-6m+15}{4}$.
where the second terms yields
\begin{eqnarray}\label{6.35}
0=A(\alpha \xi+ \beta V)+ m|H|(\alpha \xi+ \beta V)+\beta \xi-\alpha V.
\end{eqnarray}
Now, by taking into account the equation $(\ref{6.00})$ we have
\begin{eqnarray*}
AW_1&=&\gamma_1W_1\nonumber\\
&=&\cos \theta A\xi+\sin \theta AV,
\end{eqnarray*}
so 
\begin{eqnarray}\label{6.34}
AV= -\xi+(\gamma_1+ \gamma_2)V,
\end{eqnarray}
then from the equations $(\ref{6.34})$ and $(\ref{6.35})$ we get
\begin{eqnarray*}
0=(m|H|\alpha)\xi+ (-2\alpha+ m|H|\beta+ \beta(\gamma_1 + \gamma_2))V,
\end{eqnarray*}
consequently it is obtained that
\begin{eqnarray*}
&&0= m|H|\alpha\\
&&0=-2\alpha+ m|H|\beta+ \beta(\gamma_1 + \gamma_2),
\end{eqnarray*}
those show, either $|H|=0$ or $\alpha=0$. When $|H|=0$ the Tanaka-Webster pseudo Hopf hypersurface $M^{2m}$ is minimal with respect to \textsf{grad}$|H|\in D^{\perp}$, clearly. Also, we get $|H|=-\frac{(\gamma_1 + \gamma_2)}{m}$, provided that $\alpha=0$. More precisely, where $\alpha=0$ then \textsf{grad}$|H|=\beta V$. By taking into account the Tanaka-Webster biharmonic condition of Theorem $\ref{6.16}$ we have
\begin{eqnarray*}
&&\Delta^{\perp}H= \textsf{trace} B((.), A_H(.))+ kH \nonumber\\
&& A \textsf{grad}|H|+m|H| \textsf{grad}|H|+\beta \xi=0,
\end{eqnarray*}
in which the second term, where \textsf{grad}$|H|=\beta V$, implies
\begin{eqnarray*}\label{6.38}
&&0=\beta AV+m|H|\beta V+\beta \xi,
\end{eqnarray*}
then by applying $(\ref{6.34})$ in the last equation we have
\begin{eqnarray*}
|H|=-\frac{\gamma_1 +\gamma_2}{m}.
\end{eqnarray*}
\end{proof}

\begin{prop}\label{6.37}
   The Tanaka-Webster biharmonic pseudo Hopf hypersurfaces have the constant mean curvature (CMC) provided that \textsf{grad}$|H|$ is in the direction $\xi$.
\end{prop}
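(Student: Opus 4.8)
The plan is to read off the tangential half of the Tanaka-Webster biharmonic system from Theorem \ref{6.16} and substitute the hypothesis. Write $a := \eta(\textsf{grad}|H|)$, so that the assumption that $\textsf{grad}|H|$ lies in the direction of $\xi$ reads $\textsf{grad}|H| = a\,\xi$. The first observation I would make is that $\xi$ and $V$ are orthogonal, since $g(\xi,V) = -g(\xi,\varphi N) = g(\varphi\xi, N) = 0$ because $\varphi\xi = 0$; hence $g(\textsf{grad}|H|, V) = a\,g(\xi,V) = 0$ and the term $g(\textsf{grad}|H|,V)\xi$ in the tangential equation vanishes at once.

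Next I would simplify the trace term. As $M^{2m}$ has codimension one, its normal bundle is one dimensional, so $\nabla^{\perp}_X H = X(|H|)N$ and $A_H = |H|A$. Therefore
\begin{eqnarray*}
\textsf{trace}\, A_{\nabla^{\perp}_{(.)}H}(.) = \sum_{\alpha} A_{\nabla^{\perp}_{e_\alpha}H}(e_\alpha) = \sum_{\alpha} e_\alpha(|H|)\,A e_\alpha = A(\textsf{grad}|H|).
\end{eqnarray*}
Feeding $\textsf{grad}|H| = a\,\xi$ and the vanishing computed above into the second line of the system in Theorem \ref{6.16}, the tangential condition collapses to
\begin{eqnarray*}
a\,A\xi + m a\,|H|\,\xi - a\,V = 0.
\end{eqnarray*}

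The pivotal identity is then $A\xi = -V$. This holds on any hypersurface of a Sasakian space form with $\xi$ tangent: combining the Weingarten formula with $\overline{\nabla}_X\xi = -\varphi X$ from $(\ref{6.2})$ gives $g(A\xi, X) = g(\varphi N, X) = -g(V,X)$ for every $X$. The same relation is recorded in $(\ref{6.50})$, where $A_H\xi = -|H|V$ yields $A\xi = -V$ wherever $|H|\neq 0$, and it is equally forced by the pseudo Hopf eigendata $\gamma_1 = -\tan\theta$, $\gamma_2 = \cot\theta$ of $(\ref{6.00})$. Substituting $A\xi = -V$ reduces the tangential condition to
\begin{eqnarray*}
m a\,|H|\,\xi - 2a\,V = 0.
\end{eqnarray*}

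To finish I would use the linear independence of $\xi$ and $V$: the $V$-component forces $2a = 0$, so $a = 0$, and hence $\textsf{grad}|H| = a\,\xi = 0$, i.e. $|H|$ is constant. The only points needing care are the reduction of the trace term and the identity $A\xi = -V$; once these are established the conclusion is immediate. I would emphasize that the argument is insensitive to whether $|H|$ vanishes, because $A\xi = -V$ is available pointwise from $(\ref{6.2})$ independently of $|H|$.
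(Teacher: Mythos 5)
Your proposal is correct and follows essentially the same route as the paper: both plug $\textsf{grad}|H|=\alpha\xi$ into the tangential equation of Theorem \ref{6.16}, reduce the trace term to $A(\textsf{grad}|H|)$, invoke $A\xi=-V$ (the paper cites $(\ref{6.50})$ for this), and conclude $\alpha=0$ from the linear independence of $\xi$ and $V$. Your added justification that $A\xi=-V$ holds pointwise via $(\ref{6.2})$ and the Weingarten formula, independently of whether $|H|$ vanishes, is a small but genuine improvement in rigor over the paper's appeal to $A_H\xi=-|H|V$.
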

\begin{proof}
Let $M^{2m}$ be a Tanaka-Webster biharmonic hypersurface in $\overline{M}^{2m+1}(c)$. We take \textsf{grad}$|H|=\alpha \xi$ according to the assumption. Then with respect to the Theorem $\ref{6.16}$ we have
\begin{eqnarray}
\left\{
  \begin{array}{ll}
   \hbox{$\Delta^{\perp}H= \textsf{trace} B((.),A_H(.))+ lH$;} \\
    \hbox{$A \textsf{grad}|H| + m|H| \textsf{grad}|H|-\alpha V=0$},
  \end{array}
\right.
\end{eqnarray}
where the second term of the above condition yields
\begin{eqnarray}
&&\alpha A\xi+ m|H|\alpha \xi-\alpha V=0,\nonumber
\end{eqnarray}
by taking into account the equation $(\ref{6.50})$ then $\alpha=0$. So, \textsf{grad}$|H|=0$ which states $|H|=$constant.
\end{proof}
Immediately the bellow result follows
\begin{cor}
Nonexistence Tanaka-Webster biharmonic pseudo Hopf hypersurface where \textsf{grad}$|H|$ is in the direction $\xi$ is deduced where $c<-3.$
\end{cor}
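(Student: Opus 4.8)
The statement is an immediate consequence of the preceding Proposition~\ref{6.37} together with Corollary~\ref{6.36}. The plan is to chain these two results. First I would invoke Proposition~\ref{6.37}: since we assume $\textsf{grad}|H|$ lies in the direction $\xi$, that proposition forces $M^{2m}$ to have constant mean curvature, $|H|=\text{constant}$. This means we are now in the CMC regime, which is exactly the hypothesis of Corollary~\ref{6.36}.

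Next I would apply Corollary~\ref{6.36} in its contrapositive form. That corollary asserts that \emph{if} a Tanaka-Webster biharmonic hypersurface with CMC and $|H|\neq 0$ exists, then necessarily
\begin{eqnarray*}
c> \frac{-6m+7}{2m+3}.
\end{eqnarray*}
Therefore, whenever $c$ fails to satisfy this bound, no such proper (non-minimal) biharmonic hypersurface can exist. The task is thus to verify that the hypothesis $c<-3$ indeed violates the inequality, i.e.\ that $-3 \leq \frac{-6m+7}{2m+3}$ for all relevant $m$, so that $c<-3$ lies strictly below the admissible threshold.

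The remaining step is the elementary numerical comparison. I would check that $\frac{-6m+7}{2m+3}+3 = \frac{(-6m+7)+3(2m+3)}{2m+3}=\frac{16}{2m+3}>0$, so that $\frac{-6m+7}{2m+3}>-3$ for every $m\geq 1$. Consequently the condition $c<-3$ implies $c<\frac{-6m+7}{2m+3}$, which contradicts the necessary existence condition from Corollary~\ref{6.36}. Hence no proper Tanaka-Webster biharmonic pseudo Hopf hypersurface with $\textsf{grad}|H|$ in the direction $\xi$ can exist when $c<-3$, and the minimal case is excluded as ``proper'' by convention.

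The main obstacle, such as it is, is purely bookkeeping rather than conceptual: one must be careful that the logical direction of the two cited results composes correctly — Proposition~\ref{6.37} supplies the CMC hypothesis needed to unlock Corollary~\ref{6.36}, and only then does the curvature threshold become available. The sole genuine verification is the sign of $\frac{16}{2m+3}$, which is immediate. I do not anticipate any analytic difficulty beyond ensuring the strict-versus-nonstrict inequalities are handled consistently.
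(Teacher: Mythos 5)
Your proposal is correct and follows exactly the route the paper intends: the paper's own proof consists of the single line ``it is followed by Corollary~\ref{6.36} and Proposition~\ref{6.37},'' and you have simply filled in the elementary verification that $\frac{-6m+7}{2m+3}+3=\frac{16}{2m+3}>0$, so $c<-3$ violates the necessary bound. Your caveat about the minimal ($|H|=0$) case being excluded is a reasonable reading of the implicit ``proper'' convention that the paper leaves unstated.
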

\begin{proof}
Obviously, it is followed by the Corollary $\ref{6.36}$ and Proposition $\ref{6.37}$.
\end{proof}

This work has been financially supported by the Azarbaijan Shahid Madani Uniyersity
under the grant number...

%\subsection{Author Data}

% ------------------------------------------------------------------------

\subsection*{Acknowledgment}
Many thanks to referee for developing this class file.

% ------------------------------------------------------------------------
\end{document}